\numberwithin{equation}{section}
\newtheorem{theorem}{Theorem}[section]
\newtheorem{corollary}[theorem]{Corollary}
\newtheorem{lemma}[theorem]{Lemma}
\DeclarePairedDelimiter{\ceil}{\lceil}{\rceil}
\def\RR{\mathbb{R}}
\def\e{\mathbbm{1}}
\def\d{\,\mathrm{d}}
\def\TT{\mathbb{T}}
\def\ZZ{\mathbb{Z}}
\title[Kemperman's inequality and Freiman's lemma via few translates]{Kemperman's inequality and Freiman's lemma via few translates}
\author{Yifan Jing}
\address{Mathematical Institute, University of Oxford, Oxford OX2 6GG, UK}
\email{jing@maths.ox.ac.uk}
\author{Akshat Mudgal}
\address{Mathematical Institute, University of Oxford, Oxford OX2 6GG, UK}
\email{mudgal@maths.ox.ac.uk}
\thanks{YJ and AM are supported by Ben Green’s Simons Investigator Grant, ID:376201. }
\subjclass[2020]{11B30, 22C05} 
\keywords{Kemperman's inequality, Freiman's lemma, Inverse theorems}
\begin{document}

\begin{abstract}

    Let $G$ be a connected compact group equipped with the normalised Haar measure $\mu$. Our first result shows that given $\alpha, \beta>0$, there is a constant $c = c(\alpha,\beta)>0$ such that for any compact sets $A,B\subseteq G$ with 
    \[ \alpha\mu(B)\geq\mu(A)\geq  \mu(B) \ \ \text{and} \ \  \mu(A)+\mu(B)\leq 1-\beta,  \]
    there exist $b_1,\dots b_c\in B$ such that
    \[
    \mu(A\cdot \{b_1,\dots,b_c\})\geq  \mu(A)+\mu(B).
    \]
    A special case of this, that is, when $G=\mathbb{T}^d$, confirms a recent conjecture of Bollob\'as, Leader and Tiba. 
    
    We also prove a quantitatively stronger version of such a result in the discrete setting of $\mathbb{R}^d$. Thus, given $d \in \mathbb{N}$, we show that there exists $c = c(d) >0$ such that for any finite, non-empty set $A \subseteq \mathbb{R}^d$ which is not contained in a translate of a hyperplane, one can find $a_1, \dots, a_c \in A$ satisfying
    \[ |A+ \{a_1, \dots, a_c\}| \geq (d+1)|A| - O_d(1). \]
    The main term here is optimal and recovers the bounds given by Freiman's lemma up to the $O_d(1)$ error term.
\end{abstract}
\maketitle

\section{Introduction}

Given finite, non-empty sets $A, B$ of integers, a classical result in additive number theory implies that the sumset 
\[ A+ B = \{ a + b : a \in A, b \in B\} \]
satisfies
\begin{equation} \label{fir}
|A+B| \geq |A| + |B| - 1. 
\end{equation} 
This has been since generalised to a variety of settings, including the case when $A,B$ are subsets of some connected, compact group $G$ as well as when $A,B$ are finite subsets of $\mathbb{R}^d$. In order to elucidate upon the former, we describe some notation, and so, whenever we write $G$ to be a compact, connected group, we will associate with $G$ the group operation $\cdot$ and the normalised Haar measure $\mu$ satisfying $\mu(G) = 1$. 
In this setting, a fundamental result of Kemperman~\cite{Kemperman} implies that whenever $A,B \subseteq G$ are non-empty compact sets, then the product set
\[ A \cdot B = \{ a \cdot b : a \in A, b \in B\} \]
satisfies
\begin{equation} \label{kemp}
 \mu(A \cdot B) \geq \min\{\mu(A)+\mu(B), 1 \}.
 \end{equation}
When $G$ is abelian, this is also known as Kneser's inequality~\cite{Kneser}. Moving now from the continuous case to the discrete setting, we note that analysis of finite sumsets in $\mathbb{R}^d$ arose from work of Freiman \cite{Fr1973} and has played an important role in additive combinatorics, specially due to its connections to Freiman's theorem and the sum-product problem, see, for instance, \cite{Ch2003, GT2006}. In particular, denoting $\dim(A)$ to be the dimension of the affine span of $A$ for any finite, non-empty $A \subseteq \RR^d$, the well-known Freiman's lemma (see \cite[Section 1.14]{Fr1973} or \cite[Lemma 5.13]{TV2006}) implies that whenever $\dim(A) = d$, then 
\begin{equation} \label{fl}
    |A+A| \geq (d+1)|A| - d(d+1)/2. 
\end{equation} 
Both the aforementioned inequalities can be seen to be sharp, for example, one can observe that \eqref{kemp} is optimal by considering the case when $G = \mathbb{T}$ is the torus and $A,B$ are sufficiently small intervals in $\mathbb{T}$. Similarly, \eqref{fl} is sharp as evinced by the case when $A \subseteq \mathbb{R}^d$ satisfies
\begin{equation} \label{chr}
A = \{0, e_1, \dots, e_{d-1}\} \times \{1, 2,\dots,N\},
\end{equation} 
where $\{e_1, \dots, e_d\}$ form the canonical basis of $\RR^d$.


In a recent paper, Bollob\'as, Leader, and Tiba~\cite{BLT22} considered a different perspective towards \eqref{fir}, wherein, they showed that whenever $A, B \subseteq \mathbb{Z}$ are finite sets with $|A| \geq |B| \geq 1$, then there exist $b_1, b_2, b_3 \in B$ such that
\begin{equation} \label{jkl}
    |A+ \{b_1, b_2, b_3\}| \geq |A| + |B| - 1 . 
\end{equation} 
They proved some similar results when $A, B$ are subsets of $\mathbb{F}_p$ and $\mathbb{T}$, with various restrictions on the sizes of $A,B$. They further conjectured that an analogous phenomenon should hold in the higher dimensional torus $\TT^d$, that is, writing $G$ to be $\mathbb{T}^d$, there exists some $c>0$ such that for any compact, non-empty sets $A, B \subseteq G$ with $\mu(A) = \mu(B) = 1/3$, one can find $b_1, \dots, b_c \in B$ such that
\[ \mu(A \cdot \{b_1, \dots, b_c\}) \geq \mu(A) + \mu(B). \]
Here, since $G = \mathbb{T}^d$, the group operation $\cdot$ denotes the additive group operation in $\mathbb{T}^d$.

Our first result confirms a much more general version of their conjecture.

\begin{theorem}\label{thm:main}
Given $\alpha, \beta>0$, there exists some constant $c= c(\alpha, \beta)>0$ such that the following holds. Let $G$ be a connected compact group and let $A,B\subseteq G$ be compact sets with
\[ \alpha\mu(B)\geq\mu(A)\geq  \mu(B) \ \ \text{and} \ \  \mu(A)+\mu(B)\leq 1-\beta.  \]
Then there exist $b_1,\dots,b_c\in B$ such that \[
\mu(A\cdot\{b_1,\dots,b_c\})\geq \mu(A)+\mu(B). 
\]
\end{theorem}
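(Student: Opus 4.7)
The plan is to combine a quantitative inverse theorem for Kemperman's inequality~\eqref{kemp} with a direct covering argument. Fix a small parameter $\eta = \eta(\alpha, \beta) > 0$, and consider the excess $\Delta := \mu(A \cdot B) - \mu(A) - \mu(B) \geq 0$. I would split into a \emph{near-extremal} regime $\Delta \leq \eta$ and a \emph{robust} regime $\Delta > \eta$.

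In the near-extremal regime, the idea is to invoke an inverse theorem for Kemperman's inequality---established in the abelian setting by Tao and extended to connected compact groups in subsequent work---to produce a continuous surjective homomorphism $\phi \colon G \to \TT$ and arcs $I_A, I_B \subseteq \TT$ with $A \subseteq \phi^{-1}(I_A)$, $B \subseteq \phi^{-1}(I_B)$, and $|I_A| - \mu(A),\; |I_B| - \mu(B) = O_{\alpha,\beta}(\eta)$. Since $\mu(A) \geq \mu(B)$ gives $|I_A| \geq |I_B| - O(\eta)$, one can select $b_1, b_2 \in B$ whose $\phi$-images lie within $O(\eta)$ of the two endpoints of $I_B$; such elements exist because $\phi(B) \subseteq I_B$ has Haar measure at least $|I_B| - O(\eta)$. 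A short one-dimensional check then shows that $(I_A + \phi(b_1)) \cup (I_A + \phi(b_2))$ is a single arc of length at least $|I_A| + |I_B| - O(\eta) \geq \mu(A) + \mu(B) - O(\eta)$, and pulling back to $G$ (while controlling the measure of $\phi^{-1}(I_A) \setminus A$) yields $\mu(A \cdot \{b_1, b_2\}) \geq \mu(A) + \mu(B) - O(\eta)$. A further $O_{\alpha,\beta}(1)$ translates, distributed inside $I_B$, should suffice to cover the remaining $O(\eta)$-measure fibres of $\phi^{-1}(I_A + I_B)$ and push past $\mu(A)+\mu(B)$.

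In the robust regime $\Delta > \eta$, the slack should afford a greedy iteration. Having chosen $b_1, \dots, b_k \in B$ with $S_k := A \cdot \{b_1, \dots, b_k\}$ of measure strictly less than $\mu(A) + \mu(B)$, averaging $\mu(A \cdot b \setminus S_k)$ over $b \in B$ gives
\[
\frac{1}{\mu(B)} \int_B \mu(A \cdot b \setminus S_k)\,\d\mu(b) \;=\; \mu(A) - \frac{1}{\mu(B)}\int_{S_k} \mu(A^{-1} \cdot x \cap B)\,\d\mu(x).
\]
Combining this with the identity $\int_G \mu(A^{-1} \cdot x \cap B)\,\d\mu(x) = \mu(A)\mu(B)$ and the hypothesis $\mu(A \cdot B) \geq \mu(A) + \mu(B) + \eta$, one hopes to produce some $b_{k+1} \in B$ with $\mu(A \cdot b_{k+1} \setminus S_k) \geq \eta' = \eta'(\alpha, \beta, \eta) > 0$, causing the process to terminate in $c = O(1/\eta')$ steps.

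The principal obstacle is making the per-step gain in the robust regime quantitative: the naive averaging bound above becomes vacuous when $S_k$ absorbs most of the mass of the kernel $x \mapsto \mu(A^{-1} \cdot x \cap B)$. Overcoming this seems to require showing that such an absorbing $S_k$ forces $A, B$ back into the near-extremal regime, contradicting $\Delta > \eta$, and then bootstrapping using the inverse theorem on the residual configuration. Cleanly interfacing the averaging lower bound with the structural inverse theorem---so that the two regimes fit together with a uniform bound on $c$---is, I expect, the heart of the argument.
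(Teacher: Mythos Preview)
Your two-regime strategy matches the paper's outline, but both regimes have genuine gaps as written, and the fix for each is a specific idea you are missing.

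\textbf{The robust regime.} The greedy averaging you describe can only guarantee a per-step gain of order $\frac{1}{\mu(B)}\int_{A\cdot B\setminus S_k}\e_A*\e_B$, and this integrand can be arbitrarily small on most of $A\cdot B$ even when $\mu(A\cdot B)$ is large. The paper sidesteps this by running the dichotomy not on $\mu(A\cdot B)$ but on the \emph{popular} product set
\[
A\cdot_{\eta\mu(B)} B=\{x:\e_A*\e_B(x)\geq \eta\mu(B)\}.
\]
If $\mu(A\cdot_{\eta\mu(B)} B)>\mu(A)+\mu(B)+\eta\mu(B)$, then every $g$ in this set satisfies $\mathbb{P}_{b\in B}(g\in A\cdot b)\geq \eta$, so $c\gg \eta^{-2}$ \emph{random} translates already cover all but an $O(\exp(-c^{1/2}))$-fraction of it. This gives a uniform $c=c(\alpha,\beta)$ immediately; no bootstrapping with the inverse theorem is needed. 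Your split on $\mu(A\cdot B)$ cannot produce a uniform $c$ for exactly the reason you flagged: the relevant $\delta$ would depend on the particular $A,B$.

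\textbf{The near-extremal regime.} Correspondingly, the inverse theorem one invokes must be the version whose hypothesis is smallness of $\mu(A\cdot_{\eta\mu(B)} B)$ (this is how the paper states Theorem~\ref{thm: inverse Kemperman}), which is a weaker hypothesis than smallness of $\mu(A\cdot B)$. Once you have $A\subseteq\chi^{-1}(I)$, $B\subseteq\chi^{-1}(J)$ with $\lambda(I)+\lambda(J)<1$, your two-translate argument only reaches $\mu(A)+\mu(B)-O(\eta)$, and the sentence ``a further $O_{\alpha,\beta}(1)$ translates \dots\ should suffice'' is not justified: the residual $O(\eta)$ mass lives in fibres of $\chi$ where $A$ may be thin in uncontrolled ways. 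The paper's Lemma~\ref{lem: A B when A is large} closes this gap with a genuinely different idea. Take $b_1,b_2$ at the endpoints of $J$, set $X=A\cdot\{b_1,b_2\}$, and define the ``maximal projection''
\[
\pi(X)=\int_0^{\lambda(J)}\max_{y\in x+\lambda(J)\ZZ} f_X(y)\,\d\lambda(x),
\]
where $f_X$ is the fibre-measure function. A telescoping argument over periods of length $\lambda(J)$ shows $\mu(X)\geq \mu(A)+\pi(X)$. If $\pi(X)\geq\mu(B)$ you are done; otherwise, for \emph{every} $a\in A$ one has $\mu((a\cdot B)\setminus X)\geq \mu(B)-\pi(X)$, and averaging over $b_3\in B$ (via Fubini) produces a single third translate with $\mu((A\cdot b_3)\setminus X)\geq \mu(B)-\pi(X)$. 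So three translates already give $\mu(A)+\mu(B)$ exactly, with no $O(\eta)$ remainder to mop up.
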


Hence, we prove their conjecture for all connected, compact groups, including the cases when $G$ is possibly nonabelian. Moreover, our constant $c$ is independent of $G$. One way to interpret our result is that given suitable sets $A, B \subseteq G$, one can recover the lower bound \eqref{kemp} for $\mu(A \cdot B)$ by just considering few translates $A \cdot b_1, \dots, A \cdot b_c$ of $A$. For instance, a conclusion akin to that of Theorem \ref{thm:main} holds for any $K$-approximate group $A$ of $G$, that is, symmetric sets $A \subseteq G$ containing the identity for which $A \cdot A \subseteq X \cdot A$ for some $X \subseteq G$ with $|X| \leq K$. Indeed in our setting, we may apply \eqref{kemp} to deduce that
\[ 2\mu(A) \leq \mu(A \cdot A) \leq \mu(A \cdot \{x_1, \dots, x_c\} ), \]
where $\{x_1, \dots, x_c\} = X^{-1} \subseteq G$. On the other hand, a celebrated result of Breuillard, Green, and Tao \cite{BGT} shows that approximate groups are highly structured objects that essentially only come from nilpotent groups.
Despite the rarity of such objects, Theorem \ref{thm:main} suggests that one can in fact recover the above inequality for any compact set $A \subseteq G$ with $\mu(A) \leq 1- \beta$ and $c = c(\beta)$.

In the discrete setting, it is worth noting that the main result of \cite{BLT22} already implies \eqref{jkl} for finite, non-empty sets $A, B \subseteq \RR^d$ by considering Freiman isomorphisms from $A \cup B$ to sets of integers. On the other hand, noting Freiman's lemma, it is natural to ask whether one can also recover bounds akin to \eqref{fl} by just considering $O_d(1)$ translates of $A$. This is precisely the content of our next result. 

\begin{theorem} \label{dis}
Given $d \in \mathbb{N}$, there exists some constant $c= c(d)>0$ such that for every finite, non-empty set $A \subseteq \RR^d$ with $\dim(A) = d$, there exist $a_1, \dots, a_{c} \in A$ satisfying
\[ |A+ \{a_1, \dots, a_{c}\}| \geq (d+1)|A| - 5(d+1)^3.\]
\end{theorem}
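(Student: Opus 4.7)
\medskip
\noindent\textbf{Proof plan.}
I would proceed by induction on the dimension $d$. The base case $d=1$ follows directly from the Bollob\'as-Leader-Tiba inequality \eqref{jkl}: given a finite, non-empty $A \subseteq \RR$ (which, after a Freiman isomorphism, may be identified with a finite subset of $\ZZ$), applying \eqref{jkl} with $B = A$ yields three elements $a_1, a_2, a_3 \in A$ with $|A + \{a_1, a_2, a_3\}| \geq 2|A| - 1$, which beats the required $2|A| - 5\cdot 2^3 = 2|A|-40$. So $c(1) = 3$ suffices.

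For the inductive step, suppose the theorem holds in all dimensions less than $d$, and let $A \subseteq \RR^d$ be finite, non-empty with $\dim(A) = d$. I would choose a linear projection $\pi: \RR^d \to \RR^{d-1}$ along a generic direction so that $\dim(\pi(A)) = d-1$; such directions exist since $\dim A = d$ and the "bad" directions (those collapsing the span) form a proper subvariety. Applying the inductive hypothesis to $\pi(A)$ produces $v_1, \ldots, v_{c'} \in \pi(A)$ (with $c' = c(d-1)$) satisfying
\[
|\pi(A) + \{v_1, \ldots, v_{c'}\}| \geq d \cdot |\pi(A)| - 5d^3.
\]
For each $i$, I would apply the one-dimensional Bollob\'as-Leader-Tiba bound \eqref{jkl} within the fiber $A_{v_i} := A \cap \pi^{-1}(v_i)$, viewed as a subset of the line $\pi^{-1}(v_i) \cong \RR$, to obtain $U_i \subseteq A_{v_i}$ with $|U_i| \leq 3$ and $|A_{v_i} + U_i| \geq 2|A_{v_i}| - 1$. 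Setting $S = \bigcup_{i=1}^{c'} U_i \subseteq A$ gives $|S| \leq 3\,c(d-1)$, yielding the recursion $c(d) = 3\,c(d-1)$ and hence $c(d) = 3^d$.

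To verify $|A + S| \geq (d+1)|A| - 5(d+1)^3$, I would analyze the sumset $A + S$ by its fibers under $\pi$. The projection satisfies $\pi(A + S) \supseteq \pi(A) + \{v_1, \ldots, v_{c'}\}$, which has at least $d |\pi(A)| - 5d^3$ elements. For each $w$ in this projection, the fiber $(A + S) \cap \pi^{-1}(w) \subseteq \pi^{-1}(w) \cong \RR$ contains each one-dimensional sumset $A_v + U_i$ for every decomposition $w = v + v_i$ with $v \in \pi(A)$. The elementary one-dimensional bound $|A_v + U_i| \geq |A_v| + |U_i| - 1$, strengthened by $|A_{v_i} + U_i| \geq 2|A_{v_i}| - 1$ at the ``diagonal'' decomposition $w = 2v_i$, should be combined across all fibers through a careful rearrangement/averaging argument to yield a total of $(d+1)|A| - O_d(d^3)$, comfortably inside the $5(d+1)^3 = 5d^3 + 15d^2 + 15d + 5$ error budget.

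The main obstacle lies in handling the two competing regimes that the fiberwise argument must absorb: when $|\pi(A)|$ is comparable to $|A|$ (small fibers), the projection term contributes the bulk of the growth, whereas when many fibers are large — as in the Freiman extremal configuration $A = \{0, e_1, \ldots, e_{d-1}\} \times \{1, \ldots, N\}$, with $|\pi(A)| = d$ but $|A| = dN$ — one must extract the full $2|A_v| - 1$ gain from each fiber via the internal \eqref{jkl}-bound and multiply this by the projection count to reach $d(d+1)N - d(d+1)/2 = (d+1)|A| - d(d+1)/2$. Reconciling both regimes inside a single uniform estimate, while simultaneously choosing $\pi$ to avoid pathological fiber profiles and keeping the cumulative error inside $5(d+1)^3$, is the key technical point; I anticipate that either a non-generic choice of $\pi$ (tuned to balance the fiber sizes) or an additional greedy/iterative refinement will be required to close the gap in the unfavorable regime.
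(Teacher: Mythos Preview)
Your induction-on-dimension scheme has a genuine gap that you essentially identify but do not close: a generic projection $\pi:\RR^d\to\RR^{d-1}$ followed by the inductive hypothesis only yields
\[
|\pi(A)+\{v_1,\dots,v_{c'}\}|\geq d\,|\pi(A)|-5d^3,
\]
and lifting this fiber by fiber can give at best a main term of $d|A|$, not $(d+1)|A|$. Concretely, take any $A$ for which a generic $\pi$ has singleton fibers (e.g.\ the Freiman extremal set $\{0,e_1,\dots,e_{d-1}\}\times\{1,\dots,N\}$ projected along a generic direction). Then $|\pi(A)|=|A|$, each $U_i$ is a single point, the ``diagonal'' gain $2|A_{v_i}|-1$ equals $1$, and for every $w$ the fiber $(A+S)_w$ has size $1$. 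Hence $|A+S|\leq d|A|-5d^3$, which misses the target $(d+1)|A|-5(d+1)^3$ by $|A|-O_d(1)$. Choosing $\pi$ non-generically does not help either: projecting the same extremal set along $e_d$ gives $|\pi(A)|=d$, so the inductive bound $d|\pi(A)|-5d^3$ is negative and the projection contributes nothing. There is no intermediate choice of $\pi$ that simultaneously makes the projection term large and the fiber corrections add up to an extra $|A|$, so the ``careful rearrangement/averaging argument'' you allude to cannot exist in this form.

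The paper proceeds quite differently and avoids induction on $d$ altogether for the main reduction. It first proves an inverse statement (Theorem~\ref{mnz}): using Lemma~\ref{btb} together with Pl\"unnecke--Ruzsa and the structural Lemmas~\ref{m1} and~\ref{m2}, one shows that either some $c(d)$ translates already give $(d+1+1/16)|A|$, or $|A|\ll_d 1$, or $A$ is covered by at most $(d+1)^2$ translates of a single line. Theorem~\ref{dis} then follows by handling the structured case via Lemma~\ref{lin}, which is proved by induction on the number $r$ of parallel lines (peeling off an extreme line of the projection), not on $d$. The key point you are missing is this dichotomy: the $(d+1)$-growth is obtained either for free from a random-translates/inverse argument, or from the explicit line-cover structure; a bare dimension induction via projection only delivers growth factor $d$.
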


The main term in the above lower bound matches the main term in \eqref{fl} provided by Freiman's lemma, and as before, can be seen to be sharp by considering the sets represented in \eqref{chr}. A similar setup has been analysed in the recent work of Fox, Luo, Pham and Zhou \cite{FLPZ22}, wherein, the authors showed that for every $d,\varepsilon >0$, there exist constants $C = C(d,\varepsilon) \geq 1$ and $c = c(d,\varepsilon) \geq 1$ such that any finite set $A \subseteq \mathbb{R}^d$, which is not contained in $C$ translates of any hyperplane, contains elements $a_1,\dots, a_c$ satisfying
\[ |A + \{a_1,\dots, a_c\}| \geq (2^d - \varepsilon) |A|. \]
Thus, while they obtain a stronger lower bound for $|A+\{a_1,\dots, a_c\}|/|A|$, they must restrict to relatively less structured sets $A$, that is, sets $A$ which can not be covered by $C$ translates of any hyperplane. Moreover, the quantitative dependence of $C$ on $d,\varepsilon$ is slightly weak, since this arises from an application of the Freiman--Bilu theorem (see \cite{GT2006}). In comparison, Theorem \ref{dis} provides lower bounds for $|A+\{a_1,\dots, a_c\}|/|A|$ that grow linearly with $d$ but hold for any $d$-dimensional subset $A\subseteq \RR^d$, essentially allowing us to obtain optimal estimates when $C = 1$.

We now turn to a related philosophy of considering inverse results for small sumsets. This forms a large collection of results in additive combinatorics, with the central theorem being Freiman's inverse theorem, which suggests that sets $A \subseteq \mathbb{Z}$ with $|A+A| \leq K|A|$ must be contained efficiently in very additively structured sets known as generalised arithmetic progressions. Moreover, obtaining quantitatively optimal versions of this result as $K$ becomes large is a key open problem in the area, see \cite{Sa2013}. On the other hand, when $K$ is very small, say $K < 3$, then we have very sharp characterisations. This is the content of Freiman's $3k-4$ theorem (see \cite[Theorem 5.11]{TV2006}) which implies that any finite set $A \subseteq \mathbb{Z}$ with $|A+A| \leq 3|A|-4$ is contained in an arithmetic progression $P$ of length $|A+A| - |A|+1$. Such types of results have been extended to a variety of settings including the case of finite fields and $\ZZ^d$, see \cite{CSS2020, St2010} and the references therein.

More recently, it was shown in \cite{BLT22b} that one can obtain an analogue of Freiman's $3k-4$ theorem for sumsets akin to \eqref{jkl}, that is, instead of assuming upper bounds for $|A+A|$ with $A\subseteq \mathbb{Z}$, one can assume that $|A+A'| \leq (2 + \varepsilon)|A|$ for every $A' \subseteq A$ with $|A'| =4$ and with $\varepsilon$ being sufficiently small, whereupon, one may deduce that $A$ is contained in an arithmetic progression $P$ of size $(1+ \varepsilon + O(\varepsilon^2))|A|$. In our paper, we are also able to prove these type of inverse theorems in the settings of Theorems \ref{thm:main} and \ref{dis}, and we present the first of these below.

\begin{theorem}\label{thm:main inverse}
Given $\varepsilon,\alpha, \beta>0$, there exist constants $c,\eta$  depending only on $\varepsilon,\alpha, \beta$, such that the following holds. 
Let $G$ be a connected compact group, let $A,B\subseteq G$ be compact sets such that $\mu(A)+\mu(B)<1-\beta$, and $\alpha^{-1}\mu(B)\leq\mu(A)\leq \alpha\mu(B)$. Moreover, suppose that for every $b_1,\dots,b_c\in B$, we have
\[
\mu(A\cdot\{b_1,\dots,b_c\})\leq \mu(A)+\mu(B)+\eta \min\{\mu(A),\mu(B)\}. 
\]
Then there is a surjective group homomorphism $\chi:G\to \TT$  and two compact intervals $I,J\subseteq \TT$, with $\lambda$ being the normalised Lebesgue measure on $\TT$, such that
\[
\lambda(I)\leq (1+\varepsilon)\mu(A) \ \ \text{and} \ \  \lambda(J)\leq (1+\varepsilon)\mu(B) \ \ \text{and} \ \  A\subseteq \chi^{-1}(I) \ \ \text{and} \ \ B\subseteq \chi^{-1}(J). \]
\end{theorem}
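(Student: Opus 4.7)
The plan is to reduce Theorem \ref{thm:main inverse} to a stability theorem for Kemperman's inequality itself. Such stability results, proved in various forms by Griesmer, Tao, Christ, and most recently by Jing--Tran--Zhang, assert that whenever $\mu(A \cdot B) \leq \mu(A) + \mu(B) + \eta'$ with $\mu(A) + \mu(B) \leq 1 - \beta$ and $\mu(A) \asymp \mu(B)$, there exist a continuous surjective homomorphism $\chi : G \to \TT$ and compact intervals $I, J \subseteq \TT$ such that $A \subseteq \chi^{-1}(I)$, $B \subseteq \chi^{-1}(J)$, and $\lambda(I), \lambda(J)$ are quantitatively close to $\mu(A), \mu(B)$ as $\eta' \to 0$. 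Granting such a tool, it suffices to upgrade the few-translates hypothesis to the global estimate
\[
\mu(A \cdot B) \leq \mu(A) + \mu(B) + C_{\alpha,\beta}\, \eta \cdot \min\{\mu(A), \mu(B)\}.
\]

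To establish this global bound, I would first apply Theorem \ref{thm:main} with parameters $\alpha, \beta$ to obtain its constant $c_0 = c_0(\alpha, \beta)$, and take the $c$ in Theorem \ref{thm:main inverse} to be at least $c_0$. Choose $b_1^*, \dots, b_c^* \in B$ nearly maximising $\mu(A \cdot \{b_1, \dots, b_c\})$ over all $c$-subsets; by Theorem \ref{thm:main} this supremum is at least $\mu(A) + \mu(B)$, and by hypothesis it is at most $\mu(A) + \mu(B) + \eta \min\{\mu(A), \mu(B)\}$. Set $D := A \cdot \{b_1^*, \dots, b_c^*\}$ and define $h(x) := \mu(\{b \in B : x \in A \cdot b\})/\mu(B)$; by Fubini one has $\int_G h \, d\mu = \mu(A)$ and $\mu(A \cdot B) = \mu(\{h > 0\})$. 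A swap argument, replacing each $b_i^*$ in turn by an arbitrary $b \in B$, forces the near-optimality of $(b_1^*, \dots, b_c^*)$ to concentrate $h$ essentially on $D$ up to an error of order $\eta \min\{\mu(A), \mu(B)\}$. Combining this concentration with an iterated application of Theorem \ref{thm:main} to suitable sub-collections of $B$ that escape $D$ delivers the claimed bound on $\mu(A \cdot B)$.

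Once the global bound $\mu(A \cdot B) \leq \mu(A) + \mu(B) + O(\eta)$ is in hand, we directly invoke the Kemperman stability theorem. Choosing $\eta = \eta(\varepsilon, \alpha, \beta)$ small enough and tracking the quantitative dependence delivers the homomorphism $\chi : G \to \TT$ together with the intervals $I, J$ satisfying $\lambda(I) \leq (1 + \varepsilon)\mu(A)$, $\lambda(J) \leq (1 + \varepsilon)\mu(B)$, and the containments $A \subseteq \chi^{-1}(I)$, $B \subseteq \chi^{-1}(J)$, exactly as the conclusion requires.

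The main obstacle is the first upgrade: converting control over $c$-element subsets of $B$ into control over the full product set $A \cdot B$, which is an uncountable union of translates of $A$, each of measure $\mu(A)$. A priori, small fractional excesses on individual translates can in principle aggregate into a substantial excess in the union, and the swap/concentration argument sketched above must be executed carefully — almost certainly combined with an iterative refinement invoking Theorem \ref{thm:main} on sub-configurations — to rule out such slow drifting. The non-abelian nature of $G$, where left and right translates play different roles, adds a further technical layer to this analysis.
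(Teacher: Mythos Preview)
Your high-level reduction to a Kemperman stability theorem is correct, but the route you propose through the \emph{full} product set $\mu(A\cdot B)$ contains a real gap, and the paper sidesteps exactly this obstacle.

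Your swap argument does give something: if $D=A\cdot\{b_1^*,\dots,b_c^*\}$ is nearly maximal and (via Theorem~\ref{thm:main}) $\mu(D)\geq\mu(A)+\mu(B)$, then for every $b\in B$ one has $\mu(Ab\setminus D)\leq\eta\min\{\mu(A),\mu(B)\}$, and integrating over $b$ yields $\int_{G\setminus D} h\,d\mu\leq \eta\min\{\mu(A),\mu(B)\}$. But this is an $L^1$ bound on $h$ off $D$; by Markov it controls $\mu(\{h>t\}\setminus D)$ for each fixed $t>0$, i.e.\ the \emph{popular} product set, not $\mu(\{h>0\})=\mu(A\cdot B)$. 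Nothing in your sketch rules out a thin layer outside $D$ where $h$ is positive but tiny, and neither the swap nor the iterated application of Theorem~\ref{thm:main} you allude to can cover such a layer with $O_{\alpha,\beta}(1)$ translates. So the upgrade to $\mu(A\cdot B)\leq \mu(A)+\mu(B)+O(\eta)$ is not established.

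The paper avoids this entirely by never passing through $\mu(A\cdot B)$. It uses the inverse Kemperman theorem in the form of Theorem~\ref{thm: inverse Kemperman}, which is stated for the popular product set $A\cdot_{\eta'\mu(B)}B=\{x:\e_A*\e_B(x)\geq\eta'\mu(B)\}$. The contrapositive then reads: if the conclusion fails, then $\mu(A\cdot_{\eta'\mu(B)}B)>\mu(A)+\mu(B)+\eta'\min\{\mu(A),\mu(B)\}$. A direct random-selection computation (Lemma~\ref{lem: popular case}) shows that for $c\geq(\eta')^{-2}$, some $c$ translates already cover a $(1-O(e^{-\sqrt c}))$-fraction of $A\cdot_{\eta'\mu(B)}B$; this yields $b_1,\dots,b_c$ with $\mu(A\cdot\{b_1,\dots,b_c\})>\mu(A)+\mu(B)+\eta\min\{\mu(A),\mu(B)\}$, contradicting the hypothesis. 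No control on $\mu(A\cdot B)$ is ever needed, and Theorem~\ref{thm:main} is not invoked in the proof of Theorem~\ref{thm:main inverse}. In effect, your $h$-concentration is precisely what makes the random-selection lemma work for the popular set; the fix to your argument is to stop there and quote a popular-product-set inverse theorem rather than attempting to bound $\mu(A\cdot B)$.
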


Here, if one replaces the assumption that $\mu(A\cdot\{b_1,\dots,b_c\})$ is small from Theorem~\ref{thm:main inverse} by the hypothesis that the entire product set $\mu(A\cdot B)$ is small, then the corresponding inverse theorem was obtained by the first author and Tran in~\cite{JT21}, and when $G$ is abelian, it was first proven by Tao~\cite{Tao18}, see also \cite{ChristIliopoulou} for a quantitatively better bound. This asserts that sets with doubling close to $2$ in connected compact groups are dominated by a one-dimensional torus, and, in particular, when $G$ is compact semisimple, the doubling constant of any small subset should be away from $2$. Theorem~\ref{thm:main inverse} is a strengthening of this phenomenon, and we have the following immediate corollary.

\begin{corollary} \label{corollary}
There are absolute constants $c,\eta>0$ such that the following holds. 
    Let $G$ be a compact semisimple Lie group, and let $A\subseteq G$ satisfy $\mu(A)\leq 1/3$. Then there exist $a_1,\dots,a_c \in A$ such that 
    \[
   \mu(A\cdot\{a_1,\dots,a_c\})>(2+\eta)\mu(A). 
    \]
\end{corollary}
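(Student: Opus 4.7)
The plan is to derive Corollary \ref{corollary} as an immediate consequence of Theorem \ref{thm:main inverse}, exploiting the fact that a compact semisimple Lie group admits no nontrivial continuous homomorphism to the circle $\TT$. Specifically, I would apply Theorem \ref{thm:main inverse} with $B = A$, choosing $\alpha = 1$ (which makes the ratio hypothesis $\alpha^{-1}\mu(B) \leq \mu(A) \leq \alpha\mu(B)$ trivial), $\beta = 1/4$ (so that $\mu(A)+\mu(B) = 2\mu(A) \leq 2/3 < 3/4 = 1-\beta$), and fixing any absolute $\varepsilon > 0$, say $\varepsilon = 1$. Theorem \ref{thm:main inverse} then produces absolute constants $c = c(1,1,1/4) > 0$ and $\eta = \eta(1,1,1/4) > 0$, which are precisely the constants that will appear in the corollary.

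Next, I would argue by contradiction. Suppose the conclusion of the corollary fails for some compact semisimple Lie group $G$ and some compact $A \subseteq G$ with $\mu(A) \leq 1/3$. Then for every choice of $a_1,\dots, a_c \in A$ one has
\[ \mu(A\cdot \{a_1,\dots,a_c\}) \leq (2+\eta)\mu(A) = \mu(A)+\mu(A) + \eta\min\{\mu(A),\mu(A)\}. \]
With $B=A$, this is exactly the hypothesis required by Theorem \ref{thm:main inverse}, so the theorem yields a surjective continuous group homomorphism $\chi: G \to \TT$, together with intervals $I,J \subseteq \TT$ containing $\chi(A)$.

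It remains to rule out the existence of such a $\chi$. Since $G$ is a compact semisimple Lie group, the derived subgroup $[G,G]$ coincides with $G$, equivalently, the abelianization $G/[G,G]$ is trivial. Any continuous homomorphism from $G$ to the abelian group $\TT$ must factor through this abelianization, and is therefore the trivial map. This contradicts the surjectivity of $\chi$ onto $\TT$, completing the proof.

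The argument presents no real obstacle beyond correctly calibrating the parameters to absorb them into absolute constants; once Theorem \ref{thm:main inverse} is in hand, Corollary \ref{corollary} follows from the classical fact that semisimple Lie groups have no nontrivial abelian quotients. The genuine work lies in Theorem \ref{thm:main inverse} itself, which provides the strong structural dichotomy (a reduction to a one-dimensional torus factor) that makes the semisimple case contradictory on the nose.
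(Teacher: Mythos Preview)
Your proposal is correct and matches the paper's approach: the paper states Corollary~\ref{corollary} as an ``immediate corollary'' of Theorem~\ref{thm:main inverse} without spelling out a proof, and your argument supplies exactly the expected details---invoke Theorem~\ref{thm:main inverse} with $B=A$ and suitable absolute parameters, then observe that a (connected) compact semisimple Lie group, being perfect, admits no surjective continuous homomorphism to $\TT$, contradicting the structural conclusion. The only cosmetic points are that Theorem~\ref{thm:main inverse} is stated for connected compact $G$ and compact $A$, so you are implicitly assuming these (as the paper evidently does), and that the degenerate case $\mu(A)=0$ should be excluded; neither affects the substance of the argument.
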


Returning to the discrete setting, Stanchescu \cite{St2010} showed that whenever a large set $A \subseteq \ZZ^d$ with $\dim(A) =d$ has its sumset close in size to the lower bound in \eqref{fl}, then $A$ is contained in a union of $d$ parallel lines, that is, $A \subseteq T + l = \cup_{t \in T} (t+l),$ where $T \subseteq \ZZ^d$ is a set satisfying $|T| \leq d$ and $l$ is some one dimensional subspace, consequently making progress towards a problem raised by Freiman \cite{Fr1999}. An asymmetric version of the above conclusion for sets $A, B \subseteq \RR^d$ may be derived by combining ideas from \cite{Mu2019} and \cite{Mu2022}, see, for instance, the proof of Lemma \ref{cov}. Both these results seem to capture the extremality of the example presented in \eqref{chr}. Using our methods, we are able to obtain an analogous conclusion under a weaker hypothesis, that is, instead of assuming upper bounds for the entire sumset $A+A$, we operate under the assumption that for any $A' \subseteq A$ with $|A'| \ll_d 1$, the sumset $A + A'$ is close in size to the estimates provided by Theorem \ref{dis}. We record this inverse result below.

\begin{theorem} \label{mnz}
Given $d \in \mathbb{N}$, there exists some constant $c= c(d)>0$ such that the following holds true. Let $A \subseteq \RR^d$ be a finite, non-empty set with $\dim(A) = d$, such that for any $a_1, \dots, a_{c} \in A$, we have
\[ |A+ \{a_1, \dots, a_{c}\}| \leq (d+1 + 1/16)|A|.\]
Then either $|A| \ll_{d} 1$ or $A \subseteq T + l$, where $l$ is a one dimensional subspace of $\mathbb{R}^d$ and $T \subseteq \mathbb{R}^d$ satisfies $|T| \leq (d+1)^2$.
\end{theorem}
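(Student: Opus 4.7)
I prove Theorem \ref{mnz} by induction on $d$, the base case $d = 1$ being immediate since any nonempty $A \subseteq \RR$ already lies on a single line. For $d \geq 2$, the plan is to locate a one-dimensional subspace $l \subseteq \RR^d$ such that the quotient projection $\pi_l : \RR^d \to \RR^d / l$ satisfies $|\pi_l(A)| \leq (d+1)^2$; choosing one representative per fibre then yields a set $T$ with $|T| \leq (d+1)^2$ and $A \subseteq T + l$. Arguing by contradiction, I suppose $|\pi_l(A)| > (d+1)^2$ for every one-dimensional $l$, and I aim to produce few translates $a_1, \dots, a_c \in A$ pushing $|A + \{a_1, \dots, a_c\}|$ strictly above $(d + 1 + 1/16)|A|$, which contradicts the hypothesis.

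I begin by applying Theorem \ref{dis} to obtain $a_1, \dots, a_{c_0} \in A$ with $|A + \{a_1, \dots, a_{c_0}\}| \geq (d+1)|A| - 5(d+1)^3$; combined with the hypothesis, this chain is tight with total slack at most $|A|/16 + O_d(1)$. Next, pick a generic $v \in \RR^d$ so that $v$ separates all points of $A$ and the projection $\pi : \RR^d \to \RR^{d-1}$ along $v$ satisfies $\dim \pi(A) = d - 1$, and set $B := \pi(A)$; by our standing assumption $|B| > (d+1)^2$. Extend the list of translates to additionally contain the $v$-maximum $a^{\max}$ and the $v$-minimum $a^{\min}$ of $A$. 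The sumset $A + \{a_1, \dots, a_{c_0}\}$ then decomposes, through a fibre-plus-layer analysis analogous to the inductive proof of Freiman's lemma, as a bulk part whose image under $\pi$ is $B + \pi(\{a_1, \dots, a_{c_0}\})$ (with each projected fibre weighted by at least the minimum $\pi$-fibre size on $A$), together with cap contributions from $a^{\max} + A$ and $a^{\min} + A$ each supplying at least $|A|$ additional elements disjoint from the bulk. The resulting near-tightness forces $|B + \pi(\{a_1, \dots, a_{c_0}\})|$ to be within $O_d(|B|/|A|) \cdot |A|$ of $d|B|$, which verifies the hypothesis of Theorem \ref{mnz} for $B$ in dimension $d-1$ after adjusting the constant $c(d)$.

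The inductive hypothesis then gives $B \subseteq T' + l'$ with $|T'| \leq d^2$ and $l' \subseteq \RR^{d-1}$ one-dimensional, so that $A$ lies in at most $d^2$ parallel affine $2$-planes of common direction $\mathrm{span}(v, \tilde{l'})$, where $\tilde{l'} \subseteq \RR^d$ is any lift of $l'$. Applying the same argument inside each such $2$-plane (via the $d = 2$ case of Theorem \ref{mnz}, whose hypothesis is inherited by each planar slice from $A$) confines each planar slice to $O(1)$ parallel lines in a common direction $l$. Combining over the at most $d^2$ planes yields $A \subseteq T + l$ with $|T| \leq (d+1)^2$, contradicting the standing assumption that no such $l$ exists.

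The principal obstacle in this plan is the transfer/stability step: I need the proof of Theorem \ref{dis} to be quantitatively refined enough that a slack of $|A|/16 + O_d(1)$ in the sumset growth for $A$ translates into a matching, appropriately rescaled slack for the projected set $B$. This essentially amounts to a stability version of Freiman's lemma that keeps track of which elements of $A$ occupy the extremal $\pi$-fibres, and it will be the technical heart of the argument.
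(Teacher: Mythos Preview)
Your proposal is circular: you invoke Theorem \ref{dis} to obtain $a_1,\dots,a_{c_0}$ with $|A+\{a_1,\dots,a_{c_0}\}|\geq (d+1)|A|-5(d+1)^3$, but in this paper Theorem \ref{dis} is \emph{deduced from} Theorem \ref{mnz} (combined with Lemma \ref{lin}), so it is unavailable here.

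Even granting access to Theorem \ref{dis}, the induction does not close. To invoke the inductive hypothesis for $B=\pi(A)\subseteq\RR^{d-1}$ you would need $|B+\{b_1,\dots,b_c\}|\leq (d+1/16)|B|$ for \emph{every} $c$-tuple in $B$, whereas your tightness argument concerns only the single projected tuple $\pi(a_1),\dots,\pi(a_{c_0})$. For a general tuple, lifting to $A$ and using the hypothesis gives only $|B+\{b_i\}|\leq|A+\{a_i\}|\leq(d+1+1/16)|A|$, which is useless unless $|B|\approx|A|$ --- precisely what one cannot assume when the aim is to show that $A$ lies on few lines. The fibre-plus-cap decomposition does not repair this: the caps $a^{\max}+A$ and $a^{\min}+A$ are not in general disjoint from $A+\{a_1,\dots,a_{c_0}\}$ (they contribute only $O(1)$ guaranteed new points, not $|A|$ each), and the bulk bound $|\pi(A+\{a_i\})|\cdot\min_b|A_b|$ degenerates once some $\pi$-fibre has size $1$. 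The final step fails for the same reason: the hypothesis on $A$ does not pass to a planar slice $A\cap P$, and nothing forces the line directions obtained in different $2$-planes to agree.

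The paper's argument is entirely different and does not induct on $d$. It first uses the random-selection Lemma \ref{btb} to upgrade the few-translates hypothesis to genuine small doubling $|A^*+A^*|\leq(d+1+1/10)|A^*|$ on a subset $A^*\subseteq A$ with $|A^*|\geq(1-100^{-d^2})|A|$. Lemma \ref{cov} (built on Lemmas \ref{m1} and \ref{m2}) then extracts a line $l$ with $|A\cap l|\gg_d|A|$, after which a short direct count shows that if $A$ met more than $(d+1)^2$ translates of $l$, one could already choose $O_d(1)$ elements giving $|A+\{\cdot\}|>(d+2)|A|$.
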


We point out that we have not chosen to optimise the constant $1/16$ in the above result, and this can be quantitatively improved, potentially at the cost of slightly increasing the upper bound for $|T|$. It would also be interesting to show a variant of the above result where one obtains $|T| \leq d$, akin to the results in \cite{Mu2022, St2010} as well as Lemma \ref{cov} in \S5.

We now provide a brief outline of the paper. We utilise \S2 to record various preliminary definitions and results that we will require throughout our paper. These include a variety of inverse and structural results from additive combinatorics along with some standard lemmata like the Pl\"{u}nnecke-Ruzsa inequality. We employ \S3 to present the proofs of Theorems \ref{thm:main} and \ref{thm:main inverse}. The key lemma in the section is Lemma~\ref{lem: A B when A is large}, which deal with the case when $G$ has a torus quotient and the sets are close to $1$-dimensional Bohr sets. The proofs of Theorems \ref{thm:main} and \ref{thm:main inverse} is that either we are in the situation to apply Lemma~\ref{lem: A B when A is large}, or a random selection argument works. 
Our main aim of \S4 will be to prove Lemma \ref{lin}, which can be interpreted as a variant of Theorem \ref{dis} in the case when our set $A \subseteq \RR^d$ can be covered by few translates of a line. This will require a combination of combinatorial geometric and additive combinatorial methods. Moreover, Lemma \ref{lin} will then naturally combine with Theorem \ref{mnz} to deliver the proof of Theorem \ref{dis}. Finally, in \S5, we record the proof of Theorem  \ref{mnz}.

\textbf{Notation.} In this paper, we use Vinogradov notation, that is, we write $X \gg_{z} Y$, or equivalently $Y \ll_{z} X$, to mean $|X| \geq C_{z} |Y|$ where $C$ is some positive constant depending on the parameter $z$. Moreover, we write $X = O_{z}(Y)$ to mean $X \ll_z Y$. Given a group $G$ and a set $A \subseteq G$, we use $\e_A$ to denote the indicator function of the set $A$, that is, $\e_A(g) = 1$ when $g \in A$, and $\e_A(g) = 0$ when $g \in G \setminus A$.

\textbf{Acknowledgements.} We would like to thank Marcelo Campos and Zach Hunter for useful comments. We are also grateful to Zach for help in improving the presentation of our paper.

\section{Preliminaries}

A  locally compact group $G$ is a group equipped with a locally compact and Hausdorff topology on its underlying set such that the group multiplication and inversion maps are continuous. We say that a measure $\mu$ on the collection of Borel subsets of $G$ is a {\em left Haar measure} if it satisfies the following properties: 
\begin{enumerate}
     \item (nonzero) $\mu(X)>0$ for all open $X\subseteq G$;
     \item (left-invariant) $\mu(X) =\mu(a\cdot X)$ for all $a\in G$ and all measurable sets $X \subseteq G$;
    \item (inner regular) when $X$ is open, $\mu(X) =\sup \mu(K)$ with $K$ ranging over compact subsets of $X$;
    \item (outer regular) when $X$ is Borel, $\mu(X) =\inf \mu(U)$ and $U$ ranging over open subsets of $G$ containing $X$;
    \item (compactly finite) $\mu$ takes finite measure on compact subsets of $G$.
\end{enumerate}
When $G$ is a locally compact topological group, a famous theorem of Haar asserts that $G$ has a unique (up to a constant factor) left-invariant Haar measure, denoted by $\mu$. When $G$ is compact, $\mu$ is also right-invariant (that is for a measureable $X$ we also have $\mu(X \cdot g)=\mu(X)$ for all $g\in G$). We say $\mu$ is {\em normalised} if $\mu(G)=1$. 

In our proof of Theorem \ref{thm:main}, it will sometimes be more convenient to study the {\em popular product set} defined as follows. 
Given $0\leq t\leq\min\{\mu(A),\mu(B)\}$, we denote the popular product set 
\[
A\cdot_{t}B=\{x\in G \mid \e_A*\e_B(x)\geq t\},
\]
where, given any $f,g : G \to \mathbb{R}$, we define the convolution function $f*g : G \to \mathbb{R}$ as 
\[ f*g(x)=\int_G f(y)g(y^{-1}x)\d \mu(y) \]
for every $x \in G$. Note that when $G$ is compact, we also have 
\[ f*g(x)=\int_G f(xy^{-1})g(y)\d \mu(y) \]
for every $x \in G$, since $\mu$ is bi-invariant. Noting these definitions, one can further see that
\[
\lim_{t\to 0}A\cdot_t B = \mathrm{supp} (\e_A*\e_B) \subseteq A \cdot B . 
\]
We now state the well-known quotient integral formula, see, for example, \cite[Theorem 2.49]{Folland}. 
\begin{lemma}\label{lem: quotient integral formula}
   Let $G$ be a locally compact group and $H\leq G$ be a closed normal subgroup. Let $\mu_G$ and $\mu_H$ be left invariant Haar measures on $G$ and $H$ respectively. Then there is a unique left invariant Haar measure $\mu_{G/H}$ on $G/H$ such that for every compactly supported continuous function $f:G\to\mathbb C$,
   \[
   \int_G f(g)\d\mu_G (g) =\int_{G/H}\int_H f(xh) \d\mu_H (h)\d\mu_{G/H} (xH). 
   \]
\end{lemma}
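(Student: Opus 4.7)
The plan is to construct $\mu_{G/H}$ via the Riesz--Markov representation theorem applied to a suitable positive linear functional on $C_c(G/H)$, following the classical strategy (as in Folland). First, for $f\in C_c(G)$, I would define the averaging map $T_H f : G/H \to \mathbb{C}$ by
\[ T_H f(xH) = \int_H f(xh)\,\d\mu_H(h). \]
Left-invariance of $\mu_H$ makes this independent of the choice of coset representative, and routine arguments (uniform continuity of $f$ on compacts, openness of the quotient map $\pi:G\to G/H$) show $T_H f \in C_c(G/H)$.

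The core technical step is to show that $T_H$ maps $C_c(G)$ \emph{onto} $C_c(G/H)$: given $\phi \in C_c(G/H)$, pick $g \in C_c(G)$ with $T_H g > 0$ on $\mathrm{supp}(\phi)$ (produced by Urysohn's lemma applied to a compact lift of $\mathrm{supp}(\phi)$), and then set $f(x) = g(x)\,\phi(\pi(x))/T_H g(\pi(x))$; this $f$ lies in $C_c(G)$ and satisfies $T_H f = \phi$. With surjectivity in hand, define $I : C_c(G/H) \to \mathbb{C}$ by $I(T_H f) = \int_G f\,\d\mu_G$. The essential verification is well-definedness, i.e.\ that $T_H f = 0$ forces $\int_G f\,\d\mu_G = 0$. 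The standard trick is to choose an auxiliary $\psi\in C_c(G)$ normalised so that $T_H\psi \equiv 1$ on $\pi(\mathrm{supp}(f))$, apply Fubini to the iterated integral over $G\times H$, and use the left-invariance of $\mu_G$ in the variable $g\mapsto gh^{-1}$ to shift the $H$-dependence onto $f$, where it integrates to $0$ by hypothesis.

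Once $I$ is shown to be a well-defined positive linear functional on $C_c(G/H)$, the Riesz--Markov theorem produces a unique Radon measure $\mu_{G/H}$ with $I(\phi) = \int_{G/H}\phi\,\d\mu_{G/H}$ for all $\phi\in C_c(G/H)$. Unwinding the definition of $I$ gives precisely the claimed identity
\[ \int_G f\,\d\mu_G = \int_{G/H}\int_H f(xh)\,\d\mu_H(h)\,\d\mu_{G/H}(xH). \]
Left-invariance of $\mu_{G/H}$ is then immediate: for any $g\in G$ one checks $T_H(L_g f) = L_{gH}(T_H f)$, so the left-invariance of $\mu_G$ transfers directly to $\mu_{G/H}$ through the functional $I$. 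Uniqueness follows because any measure satisfying the integral identity must induce the same functional $I$, and Riesz--Markov attaches a unique Radon measure to each such functional.

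The main obstacle in the argument is the well-definedness step, where the interplay between Fubini, the left-invariance of $\mu_G$, and the vanishing of the $H$-average must be handled carefully; this step is purely measure-theoretic, but it is the only place where the normality of $H$ and the compatibility of $\mu_G$ and $\mu_H$ genuinely enter. The remaining work — continuity and compact support of $T_H f$, surjectivity onto $C_c(G/H)$, and left-invariance of the output measure — consists of standard approximation and partition-of-unity arguments that I would not reproduce in detail.
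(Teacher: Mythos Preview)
Your sketch is correct and is precisely the classical argument from Folland that the paper cites (without reproducing) for this lemma; the paper gives no independent proof. One small terminological slip: the substitution $g\mapsto gh^{-1}$ in the well-definedness step is a \emph{right} translation, so strictly speaking it is the modular-function compatibility $\Delta_G|_H=\Delta_H$ (automatic here since $H$ is normal) rather than left-invariance of $\mu_G$ that makes this work.
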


A locally compact group $G$ is called {\em connected} if it is connected as a topological space. In particular, it does not have any open subgroups. Recall that open subgroups are closed, and closed subgroups of finite indices are open, hence all the closed subgroups of a connected group have measure $0$.

We now recall Kemperman's inequality as presented in \eqref{kemp}, which can be seen as an extension of Kneser's inequality \cite{Kneser}, which in turn, is a generalisation of the Cauchy--Davenport inequality. We remark that the latter can further generalised to all locally compact groups see~\cite{JT-cauchy}. Moreover, one can prove various inverse theorems for when the lower bound in such inequalities is close to being sharp. In our proof of Theorem \ref{thm:main}, we will make use of one such inverse theorem which can be derived from the work of the first author and Tran in ~\cite{JT21}; a generalised version of which is also stated in the forthcoming work \cite{J23}. The abelian case of this theorem (with a weaker quantitative bound, in particular, with $\eta=o(1)$) was first obtained by Tao~\cite{Tao18}, with a further result by Christ--Iliopoulou~\cite{ChristIliopoulou} which dispensed a sharp exponent bound.

\begin{theorem} \label{thm: inverse Kemperman}
Given $\varepsilon,\alpha>0$, there is $\eta>0$ such that the following hold. 
Let $G$ be a connected compact group equipped with the normalised Haar measure $\mu$ and let $A,B\subseteq G$ be compact sets such that $\alpha \mu(B) \geq \mu(A)\geq \mu(B)$. If 
\[
\mu(A\cdot_{\eta \mu(B)} B)\leq \mu(A)+\mu(B)+\eta \mu(B)<1,
\]
then there is a continuous surjective homomorphism $\chi: G\to\TT$, and two compact intervals $I,J\subseteq \TT$ such that with $\lambda$ the normalised Lebesgue measure on $\TT$ we have
\[
\lambda(I)\leq (1+\varepsilon)\mu(A),\qquad \lambda(J)\leq (1+\varepsilon)\mu(B),
\]
and $A\subseteq \chi^{-1}(I)$, $B\subseteq \chi^{-1}(J)$. 
\end{theorem}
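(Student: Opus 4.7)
The plan is to establish the inverse theorem in two stages: first resolve the connected compact abelian case, then reduce the general case to the abelian one through the structure theory of connected compact groups.

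For the abelian case, I would exploit Pontryagin duality, which realises a connected compact abelian $G$ as an inverse limit of finite-dimensional tori and presents its dual as a discrete torsion-free abelian group. The hypothesis that $\mu(A \cdot_{\eta\mu(B)} B)$ is only slightly larger than $\mu(A) + \mu(B)$ says that the convolution $\e_A * \e_B$ has almost all of its $L^1$ mass on a set of near-minimal measure, which, combined with Kemperman's inequality \eqref{kemp}, strongly constrains the Fourier coefficients of $\e_A$ and $\e_B$. Following the approach of Christ--Iliopoulou, one can then use a Riesz--Sobolev rearrangement inequality on the torus, together with a stability/compactness analysis, to conclude that $A$ and $B$ must be close to preimages of intervals under some continuous character $\chi : G \to \TT$, with the sharp quantitative relationship $\eta = \eta(\varepsilon, \alpha)$.

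For the reduction from general $G$ to the abelian case, I would use the Peter--Weyl structure theorem expressing every connected compact group as an inverse limit of connected compact Lie groups, applying Lemma \ref{lem: quotient integral formula} to verify that the hypothesis on $\mu(A \cdot_{\eta \mu(B)} B)$ is inherited by suitable finite-dimensional quotients. This reduces the problem to a connected compact Lie group $G$, which decomposes (up to finite covers) as the almost-direct product $G = Z \cdot [G,G]$ of a central torus $Z$ and a semisimple factor $[G,G]$. The abelian case would then handle the $Z$-component, and composing the resulting character $Z \to \TT$ with a natural projection $G \to Z/(Z \cap [G,G])$ would yield the desired $\chi : G \to \TT$, provided the semisimple factor contributes nothing substantial.

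The main obstacle is precisely this semisimple exclusion step: showing that when $G$ is connected compact semisimple, the doubling constant of any small subset is uniformly bounded away from $2$, so that the hypothesis of the theorem forces the semisimple content to vanish. I would attempt this via representation-theoretic methods, using Peter--Weyl to decompose $\e_A * \e_B$ spectrally and exploiting the fact that the absence of nontrivial continuous characters $G \to \TT$ (equivalent to semisimplicity of $G$ in this setting) forces all contributing representations to have dimension at least two, which induces dispersion under convolution; alternatively, one could import the Breuillard--Green--Tao classification of approximate groups to rule out structured near-extremisers of Kemperman in a semisimple group. Chasing effective quantitative bounds through both stages should then yield the $\eta = \eta(\varepsilon,\alpha)$ in the statement.
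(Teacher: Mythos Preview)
The paper does not contain a proof of Theorem~\ref{thm: inverse Kemperman}. It is stated in the Preliminaries (\S2) as a known result, attributed to the first author and Tran~\cite{JT21} (with the abelian case due to Tao~\cite{Tao18} and sharpened by Christ--Iliopoulou~\cite{ChristIliopoulou}), and a generalisation is cited as forthcoming in~\cite{J23}. There is therefore no in-paper argument to compare your proposal against; the paper simply invokes this theorem as a black box in the proofs of Theorems~\ref{thm:main} and~\ref{thm:main inverse}.

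As a sketch of the cited literature, your outline is broadly on target: the abelian case via Fourier/rearrangement stability (Tao, Christ--Iliopoulou) and the passage to Lie groups via inverse limits are standard moves. The place where your proposal is thinnest is exactly where the genuine difficulty sits. The semisimple exclusion step is not a consequence of generic spectral dispersion or of the Breuillard--Green--Tao approximate group theorem; the actual arguments in~\cite{JT21} rely on more specific geometric/analytic machinery for compact Lie groups (escape-from-subgroups and pseudometric techniques in the spirit of Gleason--Yamabe). You also pass silently over why control on the \emph{popular} product set $A\cdot_{\eta\mu(B)}B$ suffices, rather than on $A\cdot B$ itself; bridging these requires an additional regularisation step (passing to large subsets where popular doubling controls ordinary doubling), which is part of what the cited works establish.
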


We now move to the discrete setting, where the ambient group is $\RR^d$ and $A \subseteq \RR^d$ is a finite set. Our first preliminary lemma in this setting will be the following inverse theorem for sumsets in higher dimensions from \cite{Mu2019}.

 \begin{lemma} \label{m1}
 Given $d \in \mathbb{N}$ and $K \geq 1$ and a finite set $A \subseteq \mathbb{R}^d$ with $|A+A| \leq K|A|$, there exist parallel lines $l_1, \dots, l_r$ in $\mathbb{R}^d$ and a constant $0 < \sigma \leq 1/2$ depending only on $K$, such that
 \[ |A \cap l_1| \geq \dots | A \cap l_r| \geq |A \cap l_1|^{1/2} \gg |A|^{\sigma}   \]
 and
 \[ |A \setminus (l_1 \cup \dots \cup l_r) | \ll K |A|^{1 - \sigma}.   \]
 \end{lemma}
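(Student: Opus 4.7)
The plan is to deduce Lemma \ref{m1} from a combination of the Plünnecke-Ruzsa inequality, Freiman's theorem for torsion-free abelian groups, and a pigeonhole argument over parallel fibres. Since $A$ is finite, after applying a Freiman isomorphism I may identify $A$ with a subset of $\ZZ^d$, so that the structural machinery for integer sumsets becomes available.

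First, I would note that by Plünnecke-Ruzsa applied to the hypothesis $|A+A| \leq K|A|$, one has $|nA - mA| \ll_{n,m,K} |A|$ for all $n,m \geq 1$. Invoking Freiman's theorem for torsion-free abelian groups, I obtain a proper generalised arithmetic progression
\[
P = \Bigl\{ a_0 + \sum_{i=1}^{d'} x_i v_i : 0 \leq x_i < N_i \Bigr\}
\]
of rank $d' \leq R(K)$ and size $|P| \leq C(K)|A|$ containing $A$, where $R,C$ depend only on $K$. After reindexing so that $N_1 \geq N_2 \geq \cdots \geq N_{d'}$, I obtain $N_1 \geq (|P|/C(K))^{1/d'} \gg_K |A|^{1/d'}$, identifying a direction $v_1$ along which $P$ is longest.

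Next, I would consider the family of parallel affine lines in $\RR^d$ in the direction $v_1$, one for each choice of $(x_2, \dots, x_{d'}) \in \prod_{i=2}^{d'}[0,N_i) \cap \ZZ^{d'-1}$. This partitions $P$ into at most $|P|/N_1$ fibres, each of size at most $N_1$. Writing $M = \max_\ell |A \cap \ell|$ for the maximum fibre size, an averaging argument gives $M \geq |A|/(|P|/N_1) \geq N_1/C(K) \gg_K |A|^{1/d'}$. Setting $\sigma := 1/(2d')$, I would then define $l_1, \dots, l_r$ to be those fibres $\ell$ satisfying $|A \cap \ell| \geq M^{1/2}$, listed in decreasing order of size. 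The condition $|A \cap l_r| \geq |A \cap l_1|^{1/2} = M^{1/2}$ holds automatically from the selection, while $M^{1/2} \gg_K |A|^{1/(2d')} = |A|^{\sigma}$ follows from the lower bound on $M$.

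Finally, to bound the remainder, the elements of $A$ outside $\bigcup_{i=1}^r l_i$ lie on the remaining fibres of $P$ in the direction $v_1$, each of which contains strictly fewer than $M^{1/2}$ elements of $A$. Hence the remainder has size at most $(|P|/N_1) \cdot M^{1/2} \leq C(K)|A| N_1^{-1/2} \ll_K |A|^{1 - 1/(2d')} = |A|^{1-\sigma}$, yielding the claimed estimate (with the explicit $K$ factor in the statement absorbed into the $K$-dependent implicit constant). The main obstacle is the dependence of $\sigma$ on $K$ through the Freiman rank $d'$: since the Green--Ruzsa bounds on $R(K)$ are only polynomial in $K$, the resulting $\sigma$ is inverse-polynomial in $K$, and any quantitative improvement on $\sigma$ would require sharper Freiman-type information, e.g.~Sanders' polynomial Freiman--Ruzsa bounds. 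A secondary technicality is ensuring that the geometric lines in $\RR^d$ corresponding to these fibres are genuinely distinct and parallel; this follows from the properness of $P$.
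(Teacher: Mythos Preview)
The paper does not prove this lemma; it is quoted as a preliminary result from \cite{Mu2019}, so there is no in-paper argument to compare against. I will therefore assess your proof on its own.

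Your overall strategy---apply Freiman's theorem to place $A$ inside a proper GAP $P$, slice along the longest direction $v_1$, and keep the rich fibres---is sound, and the numerical bounds you write for fibres are correct. The one genuine gap is your final sentence: properness of $P$ does \emph{not} guarantee that distinct fibres lie on distinct geometric lines in $\RR^d$. Properness only says the map $(x_1,\dots,x_{d'})\mapsto a_0+\sum x_iv_i$ is injective on the box $\prod_i [0,N_i)$; it does not preclude $\sum_{i\ge 2}(x_i-x_i')v_i\in\RR v_1$ for distinct tuples, which is precisely when two fibres sit on the same affine line. When fibres merge, the maximal line-intersection $M':=\max_\ell |A\cap\ell|$ may exceed $N_1$, and since the lemma demands $|A\cap l_r|\ge |A\cap l_1|^{1/2}=(M')^{1/2}$ you are forced to threshold at $(M')^{1/2}$ rather than at $M^{1/2}$; your remainder estimate $(|P|/N_1)\cdot M^{1/2}\le C(K)|A|N_1^{-1/2}$ then no longer applies.

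The repair is short. Let $L$ be the number of lines in direction $v_1$ meeting $A$, and let $\ell_1$ attain $|A\cap\ell_1|=M'$. The $L$ sets $(A\cap\ell)+(A\cap\ell_1)$ lie on $L$ distinct parallel lines, each of size at least $M'$, so $LM'\le |A+A|\le K|A|$. Selecting now all lines with $|A\cap\ell|\ge (M')^{1/2}$, the remainder is at most $L\cdot (M')^{1/2}\le K|A|/(M')^{1/2}\le K|A|/M^{1/2}\ll_K |A|^{1-1/(2d')}$, using your fibre bound $M\gg_K|A|^{1/d'}$ for the last step. With this adjustment your argument goes through.
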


Thus if $A \subseteq \RR^d$ has a small sumset, then one can efficiently cover $A$ with translates of a one dimensional subspace. We will combine this with the following asymmetric variant of Freiman's lemma from the second author's work in \cite{Mu2022}.

\begin{lemma} \label{m2}
    Let $d \geq 2$ be an integer, let $A, B \subseteq \mathbb{R}^d$ be finite sets such that $|A| \geq |B|$ and $\dim(A) = d$. Suppose that $l_1, \dots, l_r,m_1, \dots, m_q$ are parallel lines such that 
    \[ A \subseteq l_1 \cup \dots \cup l_r \ \ \text{and}  \ \ B \subseteq m_1 \cup \dots \cup m_q, \]
    with $|A \cap l_i|, |B \cap m_j| \geq 1$ for every $1 \leq i \leq r$ and $1 \leq j \leq q.$    Then we have that
    \[ |A+B| \geq |A| + \Big( d + 1 - \frac{1}{r-d + 2} - \frac{1}{q - c+ 2}\Big)|B| - (d-1)(r+q),\]
    where $c =d$ when $\dim(B) = d$ and $c = \dim(B)$ when $\dim(B) < d$.
\end{lemma}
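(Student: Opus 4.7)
The plan is to reduce to a $(d-1)$-dimensional sumset problem by projecting along the common direction of the lines, and then combine this with a fiber-wise application of Cauchy--Davenport. Let $v \in \RR^d$ be a unit vector parallel to the common direction of $l_1,\dots,l_r,m_1,\dots,m_q$, and let $\pi : \RR^d \to \RR^{d-1}$ denote the orthogonal projection along $v$. Writing $A' = \pi(A)$ and $B' = \pi(B)$, one has $|A'| \leq r$ and $|B'| \leq q$. For each $a' \in A'$ set $A_{a'} := A \cap \pi^{-1}(a')$, and similarly define $B_{b'}$, so that $|A| = \sum_{a' \in A'} |A_{a'}|$ and $|B| = \sum_{b' \in B'} |B_{b'}|$. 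Since $\dim A = d$ and $A$ lies in lines parallel to $v$, we have $\dim A' = d-1$, while a short case analysis based on whether $v$ lies in the affine hull of $B$ gives $\dim B' \in \{c-1, c\}$; this accounts for the asymmetric definition of $c$.

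The first ingredient is an asymmetric Freiman-type estimate in $\RR^{d-1}$, applied inductively to the projected sets, yielding a bound of the form $|A' + B'| \geq |A'| + d|B'| - O_d(r+q)$. The second ingredient is the fiber-wise decomposition
\[ |A + B| = \sum_{c' \in A' + B'} \bigl|(A+B) \cap \pi^{-1}(c')\bigr|, \]
together with $1$-dimensional Cauchy--Davenport in each fiber, namely $|(A+B) \cap \pi^{-1}(c')| \geq |A_{a'}| + |B_{b'}| - 1$ for any $a' \in A'$ and $b' \in B'$ with $a' + b' = c'$. To assemble these into the claimed bound, the idea is to pick an affinely independent subset $\{a'_0, \dots, a'_{d-1}\} \subseteq A'$ together with a generic linear functional $\ell$ on $\RR^{d-1}$ ordering them $\ell(a'_0) < \dots < \ell(a'_{d-1})$, and to use the ordering to isolate, within each translate $a'_i + B'$, a set of ``new'' projected sums not appearing in the union of the preceding translates. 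Summing fiber-wise Cauchy--Davenport over these new sums, and adding in the contribution from the original translates, should recover the $(d+1)|B|$ main term modulo a correction of size $O_d(r+q)$.

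The refined deficit $-(1/(r-d+2) + 1/(q-c+2))|B|$ should then emerge from a Jensen-type convexity optimization over the distribution of fiber sizes $(|A_{a'}|)_{a' \in A'}$ and $(|B_{b'}|)_{b' \in B'}$: the worst case for the lower bound is when the fiber mass is concentrated onto as few rich lines as possible, and this concentration is constrained to involve at least $r - d + 2$ (respectively $q - c + 2$) lines due to the dimension assumptions on $A'$ and $B'$. Optimizing over these extremal configurations should saturate the stated bound precisely.

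The main obstacle I anticipate is the third step: orchestrating the fiber-wise Cauchy--Davenport estimates with the $(d-1)$-dimensional Freiman bound on $A'+B'$ tightly enough to obtain the coefficient $d+1$ rather than only $d$, and further sharpening the deficit to $1/(r-d+2) + 1/(q-c+2)$ rather than a cruder $O_d(1)$ loss. This will require careful double-counting over new versus overlapping fibers and a sharp extremal analysis of fiber-size distributions, which in turn is where the final $-(d-1)(r+q)$ correction term should naturally appear.
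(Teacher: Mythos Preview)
The paper does not prove this lemma; it is quoted from \cite{Mu2022} as a preliminary result, with no argument supplied in the present paper. So there is no in-paper proof to compare your proposal against.

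That said, your outline is along the right general lines for how such statements are established: project along the common direction $v$, control $|A'+B'|$ in $\RR^{d-1}$ by a Freiman--Ruzsa-type bound, and lift back via fiberwise Cauchy--Davenport. This is the architecture one expects in \cite{Mu2022}. However, as you yourself flag, what you have written is a plan and not a proof: the crucial step of obtaining the coefficient $d+1$ (rather than $d$) and the precise deficits $1/(r-d+2)$ and $1/(q-c+2)$ is left to a vaguely described ``Jensen-type convexity optimization'' and a ``careful double-counting'' that you do not carry out. In particular, your first ingredient already overstates what the projected bound gives: with $\dim A' = d-1$ one gets at best $|A'+B'| \geq |A'| + (d-1)|B'| - O_d(1)$, not $|A'| + d|B'| - O_d(r+q)$, so the missing factor of roughly $2|B|$ must come entirely from the fiber analysis, and your sketch does not explain how this is arranged. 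The exact constants $r-d+2$ and $q-c+2$ arise from a specific extremal analysis (essentially, after fixing an affinely independent $d$-subset of $A'$, the remaining $r-d+1$ points plus one give the denominator), and nothing in your outline pins this down.

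In short: the strategy is plausible and probably the intended one, but the proposal as written has a genuine gap at exactly the point where all the content of the lemma lies. If you want to complete it, you should consult \cite{Mu2022} directly for the sharp fiber-counting argument.
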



Next, we will require the following modified version of \cite[Theorem 8']{BLT22} for our proof of Theorem \ref{mnz}.

\begin{lemma} \label{btb}
For all $K$ and $\varepsilon>0$, there exists $c = c(K,\varepsilon)>0$ such that the following holds true. Given a finite subset $A$ of some abelian group $G$, there exists a set $A^* \subseteq A$ with $|A^*| \geq (1- \varepsilon)|A|$ such that if we select $a_1, \dots, a_c$ uniformly at random from $A^*$ then 
\[ \mathbb{E}_{a_1, \dots, a_c \in A^*} |A^* + \{a_1, \dots, a_c\}| \geq \min\{(1- \varepsilon) |A^*+A^*|, K|A^*|\}. \]
\end{lemma}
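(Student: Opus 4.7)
The plan is to adapt the strategy of \cite[Theorem 8']{BLT22} to the setting of general abelian groups, and to upgrade the ``there exist $c$ good translates'' conclusion to an expectation bound over $c$ uniformly random translates. The argument splits on whether $A$ has small doubling, with the threshold set at $|A+A| \leq K'|A|$ for a suitably chosen $K' = K'(K,\varepsilon) \geq K$.

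In the structured case ($|A+A| \leq K' |A|$), define the popular sumset $P = \{x \in A + A : r_A(x) \geq \tau |A|\}$ for a small parameter $\tau = \tau(K, \varepsilon) > 0$, where $r_A(x)$ counts the number of ordered representations $x = a+a'$ with $a,a' \in A$. A double counting argument gives $\sum_{x \notin P} r_A(x) \leq \tau |A| \cdot |A+A| \leq \tau K' |A|^2$, so the number of $a \in A$ that appear in more than $\varepsilon |A|$ unpopular representations is at most $\tau K' |A|/\varepsilon$. Taking $\tau$ small enough, removing these ``bad'' $a$'s yields $A^* \subseteq A$ with $|A^*| \geq (1-\varepsilon)|A|$; a short perturbation check verifies that at most an $\varepsilon/2$ fraction of $A^* + A^*$ remains unpopular relative to $A^*$'s own representation counts. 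Since each popular $x \in A^* + A^*$ is covered by $c$ uniform random translates with probability at least $1 - (1-\tau')^c$ for an appropriate $\tau' \lesssim \tau$, choosing $c = c(\tau', \varepsilon)$ large enough yields the required expected coverage $\geq (1-\varepsilon)|A^* + A^*|$.

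In the unstructured case ($|A+A| > K' |A|$), we take $A^*$ essentially equal to $A$ and aim for $\mathbb{E}|A^* + \{a_1,\ldots,a_c\}| \geq K|A^*|$. Expanding the random covering formula and applying the Bonferroni inequality $1 - (1-p)^c \geq cp - \binom{c}{2} p^2$ yields
\[
\mathbb{E}|A^* + \{a_1,\ldots,a_c\}| \geq c|A^*| - \binom{c}{2}\, E(A^*)/|A^*|^2,
\]
where $E(A^*) = \sum_x r_{A^*}(x)^2$ is the additive energy. If $E(A^*)$ can be controlled relative to $|A^*|^3$, this suffices to obtain $\geq K|A^*|$ for $c$ a suitable function of $K$ and $\varepsilon$. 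To control $E(A^*)$ we either exploit $|A+A| > K'|A|$ directly, or iteratively peel elements $a \in A$ responsible for atypically concentrated sums, while verifying that the total number peeled is at most $\varepsilon|A|$ so that $|A^*| \geq (1-\varepsilon)|A|$ is preserved.

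I expect the main obstacle to be the unstructured case: a straightforward Balog--Szemer\'edi--Gowers application only produces subsets of size $|A|/\operatorname{poly}(K')$, much smaller than the $(1-\varepsilon)|A|$ we need. Circumventing this will likely require a more targeted peeling argument that removes only elements responsible for individual concentrations of $r_{A^*}$, together with a careful termination analysis. A secondary technical point is the harmonisation of the two cases: the doubling threshold $K'$ and the popularity threshold $\tau$ must be chosen so that both sub-arguments go through simultaneously and compose to give the final $(1-\varepsilon)$ loss rather than a degraded one, which demands careful tracking of all parameters.
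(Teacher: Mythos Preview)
Your unstructured case has a genuine gap. The Bonferroni bound $\mathbb{E}|A^*+\{a_1,\dots,a_c\}|\ge c|A^*|-\binom{c}{2}E(A^*)/|A^*|^2$ is only useful when $E(A^*)\ll |A^*|^3/K$, and a large sumset does not force small energy. Take $A=P\cup R$ with $P$ an arithmetic progression of length $|A|/2$ and $R$ a generic set of the same size: then $|A+A|\gg |A|^2$ while $E(A)\ge E(P)\gg |A|^3$, so the right-hand side above is at most $O(|A|)$ for every choice of $c$. Your peeling proposal would need to delete essentially all of $P$---half of $A$---to bring the energy down, destroying the constraint $|A^*|\ge(1-\varepsilon)|A|$. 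The obstruction is a single dense structured piece, not a few rogue elements, so no element-by-element removal with a termination analysis can repair it.

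What you are missing is the tool the paper singles out: Lemma~\ref{shm}, Shao's \emph{almost-all} version of Balog--Szemer\'edi--Gowers. If a $(1-\delta)$-fraction of pairs in $A\times A$ have sum lying in a set $S$ with $|S|\le K|A|$, it produces $A''\subseteq A$ with $|A''|\ge(1-\varepsilon)|A|$ and $|A''+A''|\le |S|+\varepsilon|A|$; the density loss is $\varepsilon$, not $1/\operatorname{poly}(K)$ as in classical BSG, which is exactly the refinement you declared unavailable. The paper's proof simply reruns \cite[Theorem~8']{BLT22} with this lemma substituted for the version of \cite[Theorem~1.1]{sh2019} used there: taking $S=P_\tau$ to be the $\tau$-popular sumset, Lemma~\ref{shm} yields an $A^*$ for which $A^*+A^*$ differs from $P_\tau$ by at most $\varepsilon|A|$ elements, whereupon random covering of $P_\tau$ delivers $(1-\varepsilon)|A^*+A^*|$. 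Incidentally, the ``short perturbation check'' in your structured case---that pruning elements with many unpopular partners makes most of $A^*+A^*$ popular---is also not the elementary averaging you suggest: controlling unpopular \emph{pairs} does not by itself control unpopular \emph{elements} of $A^*+A^*$, and it is again Lemma~\ref{shm} that does this job.
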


This may be obtained by applying the following corollary of \cite[Theorem 1.1]{sh2019} in the proof of \cite[Theorem 8']{BLT22}, instead of the original version of \cite[Theorem 1.1]{sh2019}. 

\begin{lemma} \label{shm}
Given $\varepsilon>0$ and $K \geq1$, the following is true for all $\delta>0$ sufficiently small in terms of $\varepsilon,K$. Let $A$ be a finite subset of some abelian group $G$ and let $\Gamma \subseteq A \times A$, with $|\Gamma| \geq (1- \delta)|A|^2$. Writing $S = \{a+b : (a,b) \in \Gamma\}$, suppose that $|S|\leq K|A|$.
Then there exists $A'' \subseteq A$ such that
\[ |A''| \geq (1-\varepsilon) |A| \ \ \text{and} \ \ |A''+A''| \leq |S| + \varepsilon |A|. \]
\end{lemma}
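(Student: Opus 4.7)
The plan is to deduce the statement directly from Shao's theorem \cite[Theorem 1.1]{sh2019}, which provides an asymmetric version of the desired conclusion. In the relevant form, Shao's theorem asserts that for every $\varepsilon_0>0$ and $K\geq 1$ there is $\delta_0>0$ such that whenever $X,Y$ are finite subsets of an abelian group and $\Gamma\subseteq X\times Y$ satisfies $|\Gamma|\geq (1-\delta_0)|X||Y|$ together with $|\{x+y:(x,y)\in\Gamma\}|\leq K\min(|X|,|Y|)$, there exist $X'\subseteq X$ and $Y'\subseteq Y$ of densities at least $1-\varepsilon_0$ with $|X'+Y'|\leq |\{x+y:(x,y)\in\Gamma\}|+\varepsilon_0\min(|X|,|Y|)$.

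First I would apply this with $X=Y=A$, the given $\Gamma$, and $\varepsilon_0=\varepsilon/2$, taking $\delta$ to be the associated $\delta_0$. This produces subsets $A_1,A_2\subseteq A$, each of size at least $(1-\varepsilon/2)|A|$, satisfying $|A_1+A_2|\leq |S|+(\varepsilon/2)|A|$. The second and key step is a symmetrisation: I would set $A''=A_1\cap A_2$, whose size is at least $(1-\varepsilon)|A|$ by inclusion-exclusion. Since $A''\subseteq A_1$ and $A''\subseteq A_2$, one has $A''+A''\subseteq A_1+A_2$, and therefore $|A''+A''|\leq |S|+(\varepsilon/2)|A|\leq |S|+\varepsilon|A|$, as required.

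The main potential obstacle is verifying that Shao's theorem yields the sharp conclusion $|X'+Y'|\leq |\{x+y:(x,y)\in\Gamma\}|+\varepsilon_0\min(|X|,|Y|)$ rather than the coarser bound $|X'+Y'|\leq K\min(|X'|,|Y'|)$ that one might naively expect. This sharpness is essential, for otherwise the right-hand side of Lemma~\ref{shm} would degrade to $K|A|$, which would not suffice in the application to Lemma~\ref{btb}. Should the published form of Shao's theorem give only the weaker bound, the sharper one can be recovered by an additional counting step: writing $T=(A_1+A_2)\setminus S$, each element $t\in T$ is realised as $a+b$ for some pair $(a,b)\in (A_1\times A_2)\setminus\Gamma$, of which there are at most $\delta|A|^2$. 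A degree-truncation argument on the graph of these bad pairs then allows one to pass to a further large subset reducing $|T|$ below $\varepsilon|A|$; an inspection of Shao's original proof, however, strongly suggests that this refinement is already implicit in his argument, so the deduction is ultimately routine.
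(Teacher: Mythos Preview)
Your argument is correct and is exactly the deduction the paper has in mind: apply \cite[Theorem~1.1]{sh2019} with $X=Y=A$ and a rescaled $\varepsilon$, then set $A''=A'\cap B'$ (your $A_1\cap A_2$) and use inclusion--exclusion. The concern you raise in the final paragraph is unnecessary, since Shao's theorem already yields the sharp bound $|X'+Y'|\leq |S|+\varepsilon_0\min(|X|,|Y|)$; the paper treats the whole step as routine for this reason.
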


Lemma \ref{shm} follows from a straightforward manner from \cite[Theorem 1.1]{sh2019} by setting $A'' = A' \cap B'$ in the conclusion of \cite[Theorem 1.1]{sh2019} and rescaling $\varepsilon$ appropriately.

We now record a standard result in additive combinatorics known as the Pl\"{u}nnecke-Ruzsa inequality \cite[Corollary 6.29]{TV2006}, which, in the situation when $|A+B| \leq K|B|$, allows us to efficiently bound many-fold sumsets of $A$.

\begin{lemma} \label{prin}
    Let $A,B$ be finite, non-empty subsets of some abelian group $G$ satisfying $|A+B|\leq K|B|$. Then for every $k \in \mathbb{N}$, we have that
    \[  | \{a_1  + \dots + a_k : a_1, \dots a_k \in A \}| \leq K^{k} |B|. \]
\end{lemma}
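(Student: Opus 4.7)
The plan is to follow the short, elegant approach of Petridis, which gives an alternative proof of the Pl\"unnecke--Ruzsa inequality by exploiting a carefully chosen extremal subset of $B$.

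First, I would pick a non-empty subset $X \subseteq B$ minimising the ratio $K' := |X+A|/|X|$. Since $X = B$ is itself a candidate yielding $|X+A|/|X| = |A+B|/|B| \leq K$, this minimum satisfies $K' \leq K$.

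The central ingredient is Petridis's lemma: for every finite set $C \subseteq G$, we have $|X + A + C| \leq K' |X + C|$. I would prove this by induction on $|C|$. The base case $|C|=1$ is immediate from $|X+A+\{c\}| = |X+A| = K'|X| = K'|X+\{c\}|$. For the inductive step, write $C = C_0 \cup \{c\}$ with $c \notin C_0$ and let $X^* = \{x \in X : x+c \in X+C_0\}$. The inclusion $X^* + A + c \subseteq X + A + C_0$ gives
\[
|(X+A+C_0) \cap (X+A+c)| \geq |X^*+A+c| = |X^*+A| \geq K'|X^*|,
\]
where the last inequality uses the minimality of $K'$ applied to the subset $X^* \subseteq X \subseteq B$ (and is vacuous if $X^* = \emptyset$). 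Since also $|(X+C_0) \cap (X+c)| = |X^*|$, inclusion--exclusion together with the inductive hypothesis yields
\[
|X+A+C| \leq K'|X+C_0| + K'|X| - K'|X^*| = K'|X+C|,
\]
as desired.

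With Petridis's lemma in hand, I would iterate it: a straightforward induction on $k$ (applying the lemma with $C = (k-1)A$) gives $|X + kA| \leq K'^k |X|$ for every $k \in \mathbb{N}$. Since $kA$ embeds into $X + kA$ as a translate by any fixed $x \in X$, we conclude $|kA| \leq |X+kA| \leq K'^k |X| \leq K^k |B|$, proving the claim. The only real obstacle is the inductive step in Petridis's lemma, which hinges on choosing the absorbed subset $X^*$ correctly and exploiting the minimality of $K'$ in the right direction, namely as a \emph{lower} bound on $|X^*+A|$ which feeds through inclusion--exclusion into an \emph{upper} bound on $|X+A+C|$. Everything else is bookkeeping.
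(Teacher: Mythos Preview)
Your argument is correct. The paper does not actually prove this lemma; it simply records it as the Pl\"unnecke--Ruzsa inequality and cites \cite[Corollary~6.29]{TV2006}. Your proposal supplies a complete proof via Petridis's extremal-subset argument, which is indeed the cleanest modern route to this inequality: the choice of $X\subseteq B$ minimising $|X+A|/|X|$, the inductive lemma $|X+A+C|\leq K'|X+C|$, and the iteration to $|X+kA|\leq K'^k|X|$ are all carried out correctly, including the key step where minimality of $K'$ is used as a \emph{lower} bound on $|X^*+A|$ so that inclusion--exclusion yields the desired \emph{upper} bound. So there is nothing to compare against in the paper itself; you have simply provided a valid self-contained proof where the paper gives only a reference.
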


We end this section by recalling that \eqref{jkl}, when combined with the theory of Freiman isomorphisms, implies that for any finite $A, B \subseteq \RR^d$ with $|A| \geq |B|,$ one may find elements $b_1,b_2,b_3\in B$ such that
\begin{equation} \label{b12}
    |A + \{b_1,b_2,b_3\} | \geq |A| + |B| - 1. 
\end{equation}

\section{Proof of Theorems \ref{thm:main} and \ref{thm:main inverse}}

We begin this section by proving the following lemma that can be seen to provide a stronger version of the conclusion from Theorem \ref{thm:main}, but in the more specific setting where $A,B \subseteq G$ can be mapped surjectively to large subsets of some intervals in $\TT$.



\begin{lemma}\label{lem: A B when A is large}
Let $G$ be a connected compact group with $\mu$ being the normalised Haar measure on $G$, and let $A,B$ be two compact sets in $G$ with $\mu(A)\geq \mu(B)$. Suppose  that $\chi:G\to\TT$ is a surjective compact group homomorphism and $I,J$ are two compact intervals in $\TT$ with 
\[ \lambda(I)+\lambda(J)<1, \  \text{and} \ A\subseteq \chi^{-1}(I) \ \text{and} \ B\subseteq \chi^{-1}(J), \]
where $\lambda$ is the normalised Lebesgue measure on $\TT$. Then there exist $b_1,b_2,b_3\in B$ such that 
\[ \mu(A \cdot \{b_1,b_2,b_3\})\geq \mu(A)+\mu(B).  \]
\end{lemma}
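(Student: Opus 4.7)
My plan is to reduce the statement to a one-dimensional inequality on $\TT$ via fibre analysis using the quotient integral formula, and then construct an explicit triple of translates. Let $H = \ker(\chi)$, a closed (hence compact) normal subgroup of $G$ with $G/H \cong \TT$, and let $\mu_H$ be the normalised Haar measure on $H$. For each Borel $X \subseteq G$, I define the fibre density $\phi_X(t) := \mu_H(X \cap \chi^{-1}(t))$ via an identification of each coset $\chi^{-1}(t)$ with $H$; this is well-defined because $H$ is normal and $\mu_H$ is invariant under conjugation. By Lemma~\ref{lem: quotient integral formula}, $\mu(X) = \int_\TT \phi_X\,\d\lambda$, so setting $f = \phi_A$ and $g = \phi_B$ gives $\mu(A) = \int f$, $\mu(B) = \int g$, with $f$ supported in $I$ and $g$ supported in $J$.

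The key observation is that for any $b_1,b_2,b_3 \in B$ with projections $s_i := \chi(b_i)$, each fibre $(A \cdot b_i) \cap \chi^{-1}(t)$ has $\mu_H$-measure $f(t-s_i)$, and the $\mu_H$-measure of the union of three subsets of the common coset $\chi^{-1}(t)$ is at least the maximum of the individual $\mu_H$-measures; integrating gives
\[ \mu(A \cdot \{b_1,b_2,b_3\}) \ge \int_\TT \max_{1 \le i \le 3} f(t-s_i)\,\d\lambda(t). \]
It therefore suffices to find $s_1,s_2,s_3 \in \mathrm{supp}(g)$ with $\int_\TT \max_i f(t-s_i)\,\d\lambda \ge \mu(A)+\mu(B)$, after which I can lift to $b_i \in B$ with $\chi(b_i)=s_i$. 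Setting $\alpha := \inf \mathrm{supp}(g)$ and $\beta := \sup \mathrm{supp}(g)$ (both attained by compactness), two cases arise. If $\beta-\alpha \ge \lambda(I)$, then $f(\cdot-\alpha)$ and $f(\cdot-\beta)$ have disjoint supports, yielding $\int \max \ge 2\mu(A) \ge \mu(A)+\mu(B)$ by $\mu(A) \ge \mu(B)$; the two translates $s_1=\alpha, s_3=\beta$ already suffice.

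The remaining case $\beta-\alpha < \lambda(I)$ is the substantive one. I would tackle it via the layer-cake decomposition $f = \int_0^1 \e_{A_r}\,\d r$ and $g = \int_0^1 \e_{B_r}\,\d r$ with $A_r = \{f \ge r\}$, $B_r = \{g \ge r\}$, reducing the target to
\[ \int_0^1 \lambda\bigl(A_r + \{s_1,s_2,s_3\}\bigr)\,\d r \ge \int_0^1 \bigl(\lambda(A_r) + \lambda(B_r)\bigr)\,\d r. \]
For each fixed $r$, a continuous analogue of the Bollob\'as--Leader--Tiba theorem on $\TT$---obtained from \eqref{jkl} by discretisation, which is valid here because $\lambda(I)+\lambda(J) < 1$ rules out wrap-around on $\TT$---supplies a level-dependent triple $s_1^{(r)},s_2^{(r)},s_3^{(r)} \in B_r$ with $\lambda(A_r + \{s_i^{(r)}\}) \ge \lambda(A_r)+\lambda(B_r)$. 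The principal obstacle, and where most of the work lies, is extracting a \emph{single} triple $(s_1,s_2,s_3)$ that works uniformly for almost every $r$; I would fix $s_1=\alpha$ and $s_3=\beta$ throughout (using their extremality and the fact that they belong, in a limiting sense, to every non-empty $B_r$), and then exploit the nested structure $B_{r'} \subseteq B_r$ for $r' \ge r$ together with a compactness argument on $\mathrm{supp}(g)$ to pin down a single interior $s_2$ that fills the central overlap region of $f(\cdot-\alpha)$ and $f(\cdot-\beta)$ uniformly in $r$. Integrating the resulting pointwise inequality over $r$ then gives the required bound.
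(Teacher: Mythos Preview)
Your reduction to the inequality $\int_\TT \max_i f(t-s_i)\,\d\lambda \ge \mu(A)+\mu(B)$ is too lossy and cannot be salvaged; this is a genuine gap, not just a technical obstacle. Take $G=\TT^2$ with $\chi$ the first-coordinate projection, $A=[0,\tfrac12]\times[0,\tfrac12]$ and $B=[0,\tfrac14]\times\TT$. Then $f\equiv\tfrac12$ on $I=[0,\tfrac12]$, $g\equiv 1$ on $J=[0,\tfrac14]$, $\mu(A)=\mu(B)=\tfrac14$, and $\lambda(I)+\lambda(J)=\tfrac34<1$. For \emph{any} $s_1,s_2,s_3\in J$ one has $\max_i f(\cdot-s_i)\le\tfrac12$ with support in $[0,\tfrac34]$, so $\int\max_i f(\cdot-s_i)\le\tfrac38<\tfrac12=\mu(A)+\mu(B)$. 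Your layer-cake route hits the same wall: for $r\in(\tfrac12,1]$ one has $A_r=\varnothing$ while $B_r=[0,\tfrac14]$, so $\lambda(A_r+\{s_i\})=0<\lambda(A_r)+\lambda(B_r)$; the Bollob\'as--Leader--Tiba input simply does not give that inequality when $\lambda(A_r)<\lambda(B_r)$, and the level sets of $f$ and $g$ are decoupled. The underlying reason is that by passing to $\max$ you discard all information about where $A\cdot b_i$ sits inside the $H$-fibre, and in the example the gain from $b_3$ must come from the $H$-direction (e.g.\ $b_3=(0,\tfrac12)$ works).

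The paper keeps $b_1,b_2$ at the two extreme $J$-fibres as you do, sets $X=A\cdot\{b_1,b_2\}$, and proves $\mu(X)\ge\mu(A)+\pi(X)$ for a certain ``maximal projection'' $\pi(X)$ of $X$ onto $[0,\lambda(J))$. The key difference is the choice of $b_3$: instead of working on $\TT$, one observes that for every $a\in A$ the set $a\cdot B$ lies over an interval of length $\lambda(J)$, so $\mu((a\cdot B)\setminus X)\ge\mu(B)-\pi(X)$; a Fubini swap then gives $\mathbb E_{b_3\in B}\,\mu((A\cdot b_3)\setminus X)\ge \tfrac{\mu(A)}{\mu(B)}(\mu(B)-\pi(X))\ge\mu(B)-\pi(X)$, and some $b_3$ realises this. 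This averaging over $B$ (not over $\chi(B)$) is exactly what recovers the $H$-direction contribution that your $\max$ bound throws away.
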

\begin{proof}
    By replacing $A$ and $B$ with $a^{-1} \cdot A$ and $B \cdot b^{-1}$ for some $a\in A$ and $b\in B$ respectively, we may assume that $0$ is the left end point for both $I$ and $J$. By the compactness assumption on $A$ and $B$ we may further assume that
    \[
    \chi^{-1}(0)\cap A\neq\varnothing,\quad\chi^{-1}(0)\cap B\neq\varnothing,\quad\chi^{-1}(\lambda(I))\cap A\neq\varnothing,\quad\chi^{-1}(\lambda(J))\cap B\neq\varnothing.
    \]
Note that as $G$ is compact and hence unimodular, such translations would not affect the measure of the product set $A \cdot  B$. 

Let us now consider the following short exact sequence
\[
1\to H:= \ker\chi \to G \xrightarrow{\chi} \TT \to 1,
\]
and note that $H$ is a connected compact group, write $\mu_H$ to denote the normalised Haar measure on $H$. As $\lambda(I)+\lambda(J)<1$, observe that the natural embedding $\phi:\TT\to\RR$ with $\phi(\TT)=[0,1)$ preserves the measure of the sumset 
\[ I+J = \{ i + j : i \in I, j \in J \}.\]
For clarity of exposition, we note that here, and throughout the proof of Lemma \ref{lem: A B when A is large}, we will use $+$ to denote the additive operation in the abelian group $\mathbb{T}$. Next, defining $\psi=\phi\circ\chi$, which maps $G$ to $\RR$, we will abuse some notation and write $I$, $J$ (instead of $\phi(I)$,$\phi(J)$) for the compact intervals in $\RR$. Note that we still have
\[
    \psi^{-1}(0)\cap A\neq\varnothing,\quad\psi^{-1}(0)\cap B\neq\varnothing,\quad\psi^{-1}(\lambda(I))\cap A\neq\varnothing,\quad\psi^{-1}(\lambda(J))\cap B\neq\varnothing ,
    \]
where we now view $\lambda$ as its pushforward in $\RR$. 
 Let 
 \[ b_1\in \psi^{-1}(0)\cap B \ \ \text{and} \ \  b_2\in \psi^{-1}(\lambda(J))\cap B,\]
 and let
\[
X= A \cdot \{b_1,b_2\} =  (A \cdot b_1) \cup (A \cdot b_2). 
\]
We may assume that $\lambda(I)\geq\lambda(J)$, because otherwise we would have 
\[ \mu(X)\geq 2\mu(A)\geq \mu(A)+\mu(B), \]
in which case, we would be done. For every element $x$ in $\RR$ 
and for every set $S \subseteq G$,
let us consider the fiber function with respect to $A$ defined as
\[
f_A(x)=\mu_H( ([\psi^{-1}(x)]^{-1} \cdot A) \cap H) \ \ \ \text{for every} \ x \in \psi(A), 
\]
where $[xH]$ denotes the representative element that lives on the coset $xH$. Moreover, we write $f_A(x) = 0$ for every $x \in \mathbb{R} \setminus \psi(A)$. We define the fiber functions for $B$ and $X$ in the same way. 
We further set
\[ \pi(X):= \int_{0}^{\lambda(J)} \max_{y\in x+\lambda(J)\ZZ} f_X(y)  \d\lambda. \]
This can be viewed as the size of some sort of a ``maximum projection'' of $X$ onto $[0,\lambda(J))$. It is worth pointing out that in the above definition while we are considering, for every $x \in [0, \lambda(J))$, the maximum over all $y \in x+\lambda(J)\ZZ$, in practice, this maximum only considers finitely many values of $y$ since the function $f_X$ is only supported on the set $[0, \lambda(J) + \lambda(I)]$.

Our first aim is to show  that 
\begin{equation} \label{xs}
    \mu(X)\geq \mu(A)+\pi(X).
\end{equation} 
We proceed by observing that
\[ f_X(x) = f_A(x) \ \text{for} \ x \in [0,\lambda(J)) \ \ \text{and} \ \ f_X(x) = f_A(x - \lambda(J))  \ \text{for all}  \ x > \lambda(I),\]
and so, we may apply Lemma~\ref{lem: quotient integral formula} to discern that
\[
\mu(X)=\int_{\lambda(I)-\lambda(J)}^{\lambda(I)}f_A(x)\d\lambda(x)+\int_{0}^{\lambda(I)}f_X(x)\d\lambda(x),
\]
which, in turn, gives us
\[
\mu(X) - \mu(A) =  \int_{0}^{\lambda(I)} (f_X(x)-f_A(x-\lambda(J)) )\d\lambda(x).
\]
Note that in the above expression, $f_A(x - \lambda(J)) =0$ for every $x \in [0, \lambda(J))$. Splitting the integral from $[0, \lambda(I)]$ over periods of length $\lambda(J)$, we may now deduce that
\begin{equation} \label{ex3}
\mu(X) - \mu(A) = \int_0^{\lambda(J)}\sum_{k=0}^\infty (f_X(x+k\lambda(J))-f_A(x+(k-1)\lambda(J)) ) \d\lambda(x).
\end{equation}
While for expository purposes we are taking the sum above over all $k \in \mathbb{N} \cup \{0\}$, we remark that as before, it is, in practice, a finite sum since for all $k > 2 \ceil{ \lambda(I)/ \lambda(J)}$ we have 
\[ f_X(x + k \lambda(J)) = f_A(x + (k-1) \lambda(J)) = 0 \]
as $X,A \subseteq [0, \lambda(I) + \lambda(J)]$.
Now, fixing $x_0 \in [0, \lambda(J))$, we let $k_0 = k_0(x_0)\in \mathbb{N} \cup \{0\}$ satisfy
\[ f_X(x_0 + k_0\lambda(J)) = \max_{y \in x + \lambda(J) \mathbb{Z}} f_X(y) . \]
Since
\[ f_X(x)\geq \max\{f_A(x),f_A(x-\lambda(J))\}\]
for every $x \in \mathbb{R}$ and $f_A(x - \lambda(J)) = 0$ for every $x < \lambda(J)$, we see that
\begin{align*}
    \sum_{k=0}^\infty  (f_X(x_0+k\lambda(J)) & -f_A(x_0+(k-1)\lambda(J)) )  
     \geq  \sum_{k = 0}^{k_0} (f_X(x_0+k\lambda(J))-f_A(x_0+(k-1)\lambda(J)) ) \\
 &  = f_X(x_0 + k_0 \lambda(J)) + \sum_{k=0}^{k_0-1} ((f_X(x_0+k\lambda(J))-f_A(x_0+k\lambda(J)) \\
 & \geq f_X(x_0 + k_0 \lambda(J)).
    \end{align*}
Integrating the above for all $x_0 \in [0, \lambda(J))$ and noting \eqref{ex3}, we obtain the claimed estimate
\[ \mu(X) - \mu(A) \geq \int_{0}^{\lambda(J)}\max_{y \in x + \lambda(J) \mathbb{Z}} f_X(y) = \pi(X). \]

In the rest of the proof we will assume that $\pi(X)<\mu(B)$ as otherwise we are done by applying \eqref{xs}. Note that for every $a\in A$, we have
\begin{align*}
    \mu((a \cdot B)\setminus X) 
    &\geq \max\{\mu(a \cdot B)-\mu(X\cap \psi^{-1}([\psi(a),\psi(a)+\lambda(J)])),0\} \\
    & \geq \max\{\mu(B)-\pi(X),0\}.
\end{align*}
By the assumption that $\pi(X)<\mu(B)$, we have, for every $a\in A$, the inequality
\[
\mu((a \cdot B ) \setminus X)\geq \mu(B)-\pi(X). 
\]
Let us now choose $b_3\in B$ uniformly at random with respect to $\mu$. Then by Fubini's theorem and the above inequality, we find that
\begin{align*}
\mathbb{E}(\mu( (A \cdot b_3) \setminus X))& = \frac{1}{\mu(B)}\int_G \e_A(a) \int_G \e_B(b) \e_{G\setminus X}(a \cdot b) \d\mu(b)\d\mu(a)\\
&=\frac{1}{\mu(B)}\int_G\e_A(a)\mu((a\cdot B)\setminus X) \d\mu(a) \\
& \geq \frac{\mu(A)(\mu(B)-\pi(X))}{\mu(B)}. 
\end{align*}
Therefore there exists $b_3\in B$ such that
\[ \mu(( A \cdot b_3) \setminus X) \geq \frac{\mu(A)(\mu(B)-\pi(X))}{\mu(B)} \geq \mu(B) - \pi(X),  \]
where we have used the hypothesis that $\mu(B) \leq \mu(A)$. Combining this with \eqref{xs}, we get that
\[
\mu(A \cdot \{b_1,b_2,b_3\}) = \mu(X) + \mu((A \cdot b_3) \setminus X)   \geq \mu(A)+\mu(B),
\]
which concludes the proof of Lemma \ref{lem: A B when A is large} .
\end{proof}

With this in hand, we will now essentially divide our proof of Theorem \ref{thm:main} into two cases, the first being the setting when the set of popular products $A\cdot_\delta B$, for some appropriate choice of $\delta$, is somewhat large, wherein, probabilistic methods suffice to dispense the desired result. The second case is when the set of popular products is small. Here, we will first apply an inverse theorem to show that our sets $A,B$ can be mapped surjectively to large subsets of some intervals in $\TT$, whereupon, we will apply Lemma \ref{lem: A B when A is large} to deduce the claimed estimate. We now present the deduction of our main result in the first of the above two cases.

    \begin{lemma}\label{lem: popular case}
     Let $G$ be a connected, compact group, let $A,B\subseteq G$ be compact sets and let $0<\delta<\min\{\mu(A),\mu(B)\}$ be a real number. Then for every $c\geq (\mu(B)/\delta)^2$, there exist $b_1,\dots,b_c\in B$ such that 
     \[
     \mu(A \cdot \{b_1,\dots,b_c\})\geq (1-O(\exp(-c^{1/2})))\mu(A\cdot_\delta B). 
     \]
    \end{lemma}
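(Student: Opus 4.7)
The plan is a direct probabilistic covering argument. The starting observation is to rewrite membership in $A\cdot_\delta B$ in probabilistic terms: since $G$ is compact, the Haar measure is bi-invariant and also invariant under inversion, so the map $y\mapsto y^{-1}x$ preserves $\mu$ on $G$. Applying this to the identity $\e_A*\e_B(x)=\mu(A\cap xB^{-1})$ yields $\e_A*\e_B(x)=\mu(A^{-1}x\cap B)$. On the other hand, $x\in A\cdot b$ if and only if $b\in A^{-1}x$, so when $b$ is sampled from $B$ according to the normalised probability measure $\mu|_B/\mu(B)$, one has
\[ \Pr[x\in A\cdot b] \;=\; \frac{\mu(A^{-1}x\cap B)}{\mu(B)} \;=\; \frac{\e_A*\e_B(x)}{\mu(B)}. \]
In particular, for every $x\in A\cdot_\delta B$ this probability is at least $p := \delta/\mu(B)$.

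Next I would sample $b_1,\dots,b_c$ independently from $B$ with respect to this normalised measure. Independence gives, for each fixed $x\in A\cdot_\delta B$,
\[ \Pr\bigl[x\notin A\cdot\{b_1,\dots,b_c\}\bigr] \;\leq\; (1-p)^c \;\leq\; e^{-pc} \;\leq\; e^{-\sqrt{c}}, \]
where the last step uses the hypothesis $c\geq(\mu(B)/\delta)^2 = 1/p^2$, which rearranges to $p\geq 1/\sqrt{c}$ and hence $pc\geq\sqrt{c}$. Integrating over $x\in A\cdot_\delta B$ and applying Fubini's theorem yields
\[ \mathbb{E}\Bigl[\mu\bigl((A\cdot_\delta B)\setminus A\cdot\{b_1,\dots,b_c\}\bigr)\Bigr] \;\leq\; e^{-\sqrt{c}}\,\mu(A\cdot_\delta B), \]
so there exist $b_1,\dots,b_c\in B$ with
\[ \mu\bigl(A\cdot\{b_1,\dots,b_c\}\bigr) \;\geq\; \mu\bigl((A\cdot_\delta B)\cap A\cdot\{b_1,\dots,b_c\}\bigr) \;\geq\; \bigl(1-e^{-\sqrt{c}}\bigr)\mu(A\cdot_\delta B), \]
which is the claimed bound with an explicit absolute constant in the $O(\cdot)$. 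There is no substantive obstacle: the only step warranting care is the measure-theoretic identification in the first paragraph, which crucially uses compactness of $G$ (so that $\mu$ is bi-invariant and inversion-invariant), together with the symmetry between ``left'' and ``right'' in writing $x\in A\cdot b$ as $b\in A^{-1}x$.
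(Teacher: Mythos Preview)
Your proposal is correct and follows essentially the same approach as the paper: sample $b_1,\dots,b_c$ independently from $B$, observe that for each $x\in A\cdot_\delta B$ the probability of missing $x$ is at most $(1-\delta/\mu(B))^c\leq e^{-\sqrt{c}}$, then integrate and extract a good choice. Your write-up is slightly more explicit about the measure-theoretic identity $\e_A*\e_B(x)=\mu(A^{-1}x\cap B)$ via bi- and inversion-invariance, but the argument is otherwise the same.
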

    \begin{proof}
We choose a set with $c$ elements $B_c:=\{b_1,\dots,b_c\}$ from $B$ uniformly at random. 
Note that an element $g$ is in $A \cdot B_c$ if and only if at least one of $b_i$ in $b_1,\ldots,b_c$ satisfies that $b_i^{-1}\in g^{-1}A$. Thus, the probability that $g \in A \cdot B_c$ is
\[
1-\left(1-\frac{\mu(B\cap A^{-1}g)}{\mu(B)}\right)^c = 1-\left(1-\frac{\mathbbm{1}_{A}*\mathbbm{1}_{B}(g)}{\mu(B)}\right)^c, 
\]
and so, whenever $g\in A\cdot_{\delta} B$, then the probability that $g\in A \cdot B_c$ is at least $1-(1-\delta/ \mu(B))^c$. Since $c\geq (\mu(B)/\delta)^2$, we may deduce that 
\[
\left(1-\frac{\delta}{\mu(B)}\right)^c\leq  \exp\left(-\frac{c\delta}{\mu(B)}\right)\ll \exp(-c^{1/2}),
\]
with the first inequality above following from the fact that for every $x\in(0,1)$, one has $1-x\leq \exp(-x)$. Thus by Markov's inequality, there exists a choice of $\{b_1,b_2,\dots,b_c\}$ such that
\[
\mu( (A\cdot_{\delta} B)  \setminus (A \cdot \{b_1,b_2,\dots,b_c\}) )\ll \exp(- c^{1/2})\mu(A\cdot_{\delta} B). 
\]
This, in turn, implies that
\begin{align*}
\mu(A \cdot \{b_1,b_2,\dots,b_c\})\geq (1-O(\exp(-c^{1/2})))\mu(A\cdot_{\delta} B),
\end{align*}
as desired. 
\end{proof}

    We are now ready to prove  Theorem~\ref{thm:main}.  
    
\begin{proof}[Proof of Theorem~\ref{thm:main}]
Given $\varepsilon>0$ and let $\eta=\eta(\varepsilon)$ be as in Theorem~\ref{thm: inverse Kemperman}. Let us first consider the case when
\[
\mu(A\cdot_{\eta \mu(B)} B)\leq \mu(A)+\mu(B)+\eta \mu(B).
\]
As $\mu(A)+\mu(B)\leq 1-\beta$ with some $\beta>0$, by letting $\varepsilon$ sufficiently small we may assume the right hand side of the above inequality is smaller than $1$.  Thus by Theorem~\ref{thm: inverse Kemperman}, there is a continuous surjective group homomorphism $\chi: G\to\TT$, and two compact intervals $I,J\subseteq \TT$, so that with $\lambda$ the normalised Lebesgue measure on $\TT$, 
\[
\lambda(I)\leq (1+\varepsilon)\mu(A),\qquad \lambda(J)\leq (1+\varepsilon)\mu(B),
\]
and $A\subseteq \chi^{-1}(I)$, $B\subseteq \chi^{-1}(J)$. By choosing a sufficiently small $\varepsilon$, we may assume $\lambda(I)+\lambda(J)<1$. Then by Lemma~\ref{lem: A B when A is large}, there are $b_1,b_2,b_3\in B$ such that 
\[
\mu(A \cdot \{b_1,b_2,b_3\})\geq \mu(A)+\mu(B). 
\]

 It remains to consider the case when
 \[
\mu(A\cdot_{\eta\mu(B)} B) > \mu(A)+\mu(B)+\eta \mu(B).
\]
By Lemma~\ref{lem: popular case}, for every $c\geq \eta^{-2}$ there exist $b_1,\dots,b_c$ from $B$ such that 
\begin{align*}
\mu(A \cdot \{b_1,\dots,b_c\})&\geq (1-O(\exp(-c^{1/2})))\mu(A\cdot_{\eta\mu(B)} B)\\&>(1-O(\exp(-c^{1/2})))(\mu(A)+\mu(B)+\eta\mu(B)) \\
& \geq \mu(A)+\mu(B),
\end{align*}
where the latter inequality follows by choosing $c$ to be sufficiently large compared to $\alpha$ so as to ensure that
\[
\frac{\eta\mu(B)}{2}\geq \frac{\eta\alpha\mu(A)}{2}\gg \exp(-c^{1/2})\mu(A).
\]
This
finishes the proof of Theorem \ref{thm:main}.
    \end{proof}

We conclude this section by notation that our inverse result Theorem~\ref{thm:main inverse} follows immediately from a combination of Theorem~\ref{thm: inverse Kemperman} and Lemma~\ref{lem: popular case}. 
\begin{proof}[Proof of Theorem~\ref{thm:main inverse}]
    We will proceed by showing that the contra-positive statement holds true. Thus, given $\varepsilon,\beta>0$, suppose that the conclusion of Theorem~\ref{thm:main inverse} does not hold. Applying Theorem~\ref{thm: inverse Kemperman}, we may find $\eta'>0$ such that 
    \[
    \mu(A\cdot_{\eta'\mu(B)} B)>\mu(A)+\mu(B)+\eta'\min\{\mu(A),\mu(B)\}. 
    \]
    Now by Lemma~\ref{lem: popular case}, for every $c \geq (\eta')^{-1}$, there exist $b_1,\dots,b_c$ in $B$ such that 
    \[
    \mu(A \cdot \{b_1,\dots,b_c\})>(1-O(\exp(-c)))(\mu(A)+\mu(B)+\eta'\min\{\mu(A),\mu(B)\}). 
    \]
   As $\mu(B)>\alpha\mu(A)$, we may now set $\eta=\eta'/2$ and note that whenever $c$ is large enough, we will get 
   \[
   \mu(A \cdot \{b_1,\dots,b_c\})>\mu(A)+\mu(B)+\eta\min\{\mu(A),\mu(B)\}.
   \]
   This finishes the proof of Theorem~\ref{thm:main inverse}. 
    \end{proof}

\section{Proof of Theorem $\ref{dis}$}


In this section, we will present the proof of Theorem \ref{dis} and we begin by recording the following weaker version of Theorem \ref{dis} where we allow the parameter $c=c(d)$ to also depend on the number of translates of a one dimensional subspace that we would require to cover $A$.

\begin{lemma} \label{lin}
For every $d,r \in \mathbb{N}$ with $d \leq r$, there exists $s \in \mathbb{N}$ with $s \leq 3d r^2$ such that the following holds true. Let $A$ be a finite, non-empty subset of $\mathbb{Z}^d$ such that $\dim(A) =d$ and $A \subseteq l_1 \cup \dots \cup l_r$, where $l_1, \dots, l_r$ are parallel lines such that
\[ |A \cap l_i| \geq 2\]
for each $1 \leq i \leq r$. Then there exist $a_1, \dots, a_{s} \in A$ such that
\[ |A + \{ a_1, \dots, a_s\} | \geq (d+1) |A| - 3dr. \]
\end{lemma}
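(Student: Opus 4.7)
The plan is to construct $M \subseteq A$ of cardinality at most $3dr^2$ by a pairwise application of \eqref{b12} to the one-dimensional ``slices'' of $A$ along the common line direction, and then to estimate $|A+M|$ through the fibre decomposition obtained by projecting orthogonal to this direction. By an affine change of coordinates we may assume the line direction is $e_d$, so that $A = \bigsqcup_{i=1}^r \{p_i\} \times S_i$ with $P := \{p_1,\dots,p_r\} \subseteq \ZZ^{d-1}$ a set of $r$ distinct points and $\dim P = d-1$ (using $\dim A = d$). For each unordered multiset $\{i,j\} \subseteq [r]$, assuming WLOG $|S_i| \leq |S_j|$, we apply \eqref{b12} to $S_i, S_j \subseteq \ZZ$ to extract elements $t_1^{ij}, t_2^{ij}, t_3^{ij} \in S_i$ with $|S_j + \{t_1^{ij}, t_2^{ij}, t_3^{ij}\}| \geq |S_i| + |S_j| - 1$. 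Lifting, we set $M_{\{i,j\}} := \{(p_i, t_k^{ij}) : k \in \{1,2,3\}\} \subseteq A_i$ and $M := \bigcup_{\{i,j\}} M_{\{i,j\}}$; then $|M| \leq 3\binom{r+1}{2} \leq 3dr^2$ as required.

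For the fibre analysis, decompose $A + M = \bigsqcup_{q \in P+P} \{q\} \times F_q$. For each $q = p_i + p_j$, the fibre $F_q$ contains $S_j + \{t_1^{ij}, t_2^{ij}, t_3^{ij}\}$ of size at least $|S_i| + |S_j| - 1$, so
\[
|A + M| \;\geq\; \sum_{q \in P + P} \max_{\{i,j\}:\, p_i + p_j = q}\bigl(|S_i| + |S_j| - 1\bigr).
\]
Since $\sum_{\{i,j\}}(|S_i| + |S_j|) = (r+1)|A|$, this sum would already yield the desired bound if every column in $P + P$ had a unique pair representation; the difficulty is to control the loss arising from collisions in $P + P$, where multiple pairs sum to the same $q$ but only the maximum is counted.

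To handle such collisions, we exploit $\dim P = d - 1$: we choose $d$ affinely independent points $v_1, \ldots, v_d \in P$ and apply Freiman's lemma to $P \subseteq \ZZ^{d-1}$ to deduce $|P + \{v_1, \ldots, v_d\}| \geq dr - d(d-1)/2$. A double counting argument then shows that for every line $l_i$ there are at least $d + 1$ columns, namely $p_i + v_k$ for $k \in \{1,\dots,d\}$ together with the diagonal $2 p_i$, at which the maximising pair involves the index $i$, each contributing $|S_i|$ to the maximum; summing over $i$ gives $\sum_q \max(|S_i| + |S_j|) \geq (d+1)|A| - O(dr)$, and the claimed bound follows after absorbing the $-1$ corrections into the $O(dr)$ error term. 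The hardest step will be controlling the worst-case collisions $p_i + v_k = p_{i'} + v_{k'}$: the affine independence of $v_1, \ldots, v_d$, combined with Freiman's lemma on $P$, is precisely what bounds the total loss from these collisions and delivers the required constant $3dr$.
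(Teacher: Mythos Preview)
Your setup is sound: the fibre decomposition and the pairwise application of \eqref{b12} give the valid lower bound
\[
|A+M| \;\geq\; \sum_{q\in P+P}\ \max_{p_i+p_j=q}\bigl(|S_i|+|S_j|-1\bigr),
\]
and $|M|\leq 3\binom{r+1}{2}\leq 3dr^2$ is fine since $r\geq d$. The gap is in your collision analysis, where two concrete errors occur.

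\textbf{First}, the bound you quote as ``Freiman's lemma'' is false as stated: for $d=2$, $P=\{0,1,2\}\subseteq\ZZ$ and $\{v_1,v_2\}=\{0,1\}$ one has $|P+\{v_1,v_2\}|=4$, while your claimed bound is $dr-d(d-1)/2=5$. (What Freiman's lemma does give, for $\dim P=d-1$, is $|P+P|\geq dr-d(d-1)/2$; perhaps this is what you intended, but it is a different and weaker statement for your purposes.)

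\textbf{Second}, and more seriously, the assertion that ``for every $l_i$ the maximising pair at each of the columns $p_i+v_k$ and $2p_i$ involves the index $i$'' is simply not true. Take $d=2$, $P=\{0,1,2\}\subseteq\ZZ$, $|S_1|=|S_3|=100$, $|S_2|=2$, and $\{v_1,v_2\}=\{0,1\}$. At the column $q=p_2+v_2=2$ the representations are $(p_1,p_3)$ and $(p_2,p_2)$; the maximiser is $(1,3)$ with value $200$, which does not involve $i=2$ at all. So your ``double counting argument'' cannot proceed as described, and the final paragraph is a promissory note rather than an argument: bounding the total loss from collisions is precisely the heart of the problem, and nothing you have written controls it.

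By contrast, the paper sidesteps the collision issue entirely by induction on $r$: one removes an \emph{extreme point} $x_r$ of $P$, applies the inductive hypothesis to $A'=A\setminus x_r^{\pi}$, and uses convexity to guarantee that $d+1$ specific fibres $x_r^{\pi}+x_r^{\pi},\,x_r^{\pi}+y_1^{\pi},\dots,x_r^{\pi}+y_{d-1}^{\pi}$ are disjoint from $A'+A'$. Those $d+1$ new fibres supply exactly $(d+1)|x_r^{\pi}|-O(1)$ new elements, and the induction closes. Your global approach could perhaps be rescued, but doing so seems to require the same extreme-point idea carried out one line at a time, at which point you have essentially reproduced the paper's proof.
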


We note that Lemma \ref{lin} and Theorem \ref{mnz} combine together to deliver Theorem \ref{dis} in a straightforward manner.

\begin{proof}[Proof of Theorem \ref{dis}]
Let $A \subseteq \mathbb{R}^d$ be a finite, non-empty set with $\dim(A) =d$. Applying Theorem \ref{mnz}, we can either find $a_1,\dots, a_c \in A$ such that
\[ |A + \{a_1, \dots, a_c\}| \geq (d+ 1 + 1/16)|A|,\]
or we have that $|A| \ll_d 1$ or $A$ is contained in at most $(d+1)^2$ translates of some line $l$. We are done in the first case, and so, suppose that $|A| \ll_d 1$. In this case, we may take $\{a_1, \dots, a_c\} = A$ and apply \eqref{fl} to deduce that
\[ |A+ \{a_1, \dots, a_c\}| = |A+A| \geq (d+1) |A| - d(d+1)/2,\]
which is better than the desired bound. Thus, we now consider the final case where $A$ is contained is at most $(d+1)^2$ translates of some line $l$. Here, we may remove at most $2(d+1)^2$ many elements from $A$ to further assume that $A$ is covered by translates $l_1,\dots, l_r$ of $l$ with $r\leq (d+1)^2$, such that $|A \cap l_i| \geq 2$ for every $1 \leq i \leq r$. In this setting, we may now apply Lemma \ref{lin} to obtain the desired bound in a straightforward manner.
\end{proof}

Thus, our aim for the rest of this section is to prove Lemma \ref{lin}.

\begin{proof}[Proof of Lemma \ref{lin}]
We will prove this lemma by induction, and thus, note that when $d=1$, this follows from \eqref{b12}. We may now assume that $d \geq 2$, in which case, we denote  $H$ to be the $(d-1)$-dimensional subspace orthogonal to $l_1$ and we let $\pi : \mathbb{R}^d \to H$ be the natural projection map. We write $p_i = A \cap l_i$ and $x_i = \pi(p_i)$ for every $1 \leq i \leq r$ as well as $x^{\pi} = \pi^{-1}(x) \cap A$ for every $x \in H$. This allows us to define $X = \pi(A) = \{x_1, \dots, x_r\}$. Note that since $\dim(A) = d$, we have that $\dim(X) = d-1$. Let $\mathcal{C}$ be the convex hull of $X$ and without loss of generality, let $x_r \in X$ be an extreme point of $X$, that is, let $x_r$ be a vertex on the convex hull $\mathcal{C}$ of X. We now consider the set $X' = X \setminus \{x_r\}$ and its preimage $A' = \{ a \in A : \pi(a) \in X' \}$.
\par

Noting that $d-2 \leq \dim(X') \leq d-1$, we divide our proof into two cases, the first being when $\dim(X') = d-2$. In this case, since $\dim(X )= d-1$, we see that $x$ does not lie in the affine span of $X'$, whence, the sets 
\[ \{2x_r\},   X' + \{x_r\} \ \ \text{and} \ \ X' + X' \]
are pairwise disjoint. This, in turn, implies that the sets
\[ x_r^{\pi} + x_r^{\pi}, x_r^{\pi} + x_1^{\pi}, \dots, x_r^{\pi} + x_{r-1}^{\pi}, A' + A'\]
are pairwise disjoint. We may now apply the inductive hypothesis for $A'$ to deduce that there exist $a_1, \dots, a_{s'} \in A'$, with $s' \leq 3(d-1)(r-1)^2$, such that
\begin{equation} \label{dip}  
 |A' + \{a_1,\dots, a_{s'}\}| \geq d|A'| - 3(d-1)(r-1). \end{equation}
Let $I_1 =\{ 1\leq i \leq r: |x_r^{\pi}| \geq |x_i^{\pi}| \}$ and let $I_2 = \{1,\dots, r\}\setminus I_1$. Given $i \in I_1$, we can apply \eqref{b12} for the sumset $x_r^{\pi} + x_i^{\pi}$ to obtain elements $a_{i,1}, \dots, a_{i,3} \in x_i^{\pi}$  such that
\[ |x_r^{\pi} + \{a_{i,1}, \dots, a_{i,3}\}| \geq |x_r^{\pi}| + |x_i^{\pi}| -1. \]
Similarly, for every $i \in I_2$, we can apply \eqref{b12} for the sumset $x_r^{\pi} + x_i^{\pi}$ to obtain elements $a_{i,1}, \dots, a_{i,3} \in x_r^{\pi}$  such that
\[ |x_i^{\pi} + \{a_{i,1}, \dots, a_{i,3}\}| \geq |x_r^{\pi}| + |x_i^{\pi}| -1. \]
Putting these two together along with \eqref{dip}, we obtain a set 
\[ B = \{a_{1,1}, \dots, a_{r,3}, a_1, \dots, a_{s'}\} \subseteq A \]
with $|B| \leq s' + 3r$ such that
\begin{align*}
|A+ B| & \geq |A' + \{a_1, \dots, a_{s'}\}| + \sum_{i \in I_1} |x_{r}^{\pi} + \{a_{i,1}, \dots, a_{i,3}\} | + \sum_{i \in I_2} |x_{i}^{\pi} + \{a_{i,1}, \dots, a_{i,3}\} |  \\
& \geq d|A'| - 3(d-1)(r-1) + \sum_{1 \leq i \leq r} (|x_r^{\pi}| + |x_i^{\pi}| - 1) \\
& \geq (d+1) |A'| + (r+1)|x_r^{\pi}| - r  - 3(d-1)(r-1) \\
& \geq (d+1) |A| - 3dr.
\end{align*}
Moreover, by the inductive hypothesis, we have 
\[ |B| \leq s' + 3r \leq 3(d-1)(r-1)^2 + 3r \leq 3dr^2 , \]
and consequently, we obtain the required conclusion in this case.
\par

We now consider the case when $\dim(X')= d-1$. In this case, we analyse the convex hull $\mathcal{C}'$ of $X'$. Here, since $\dim(X') = d-1$ and since $x_r$ is an extreme point of $\mathcal{C}$, we find elements $y_1, \dots, y_{d-1} \in X'$ such that 
\[ \{x_r, (x_r + y_1)/2, \dots, (x_r + y_{d-1})/2 \} \cap \mathcal{C}' = \varnothing .\]
This implies that the sumsets
\[ x_r^{\pi} + x_r^{\pi}, x_r^{\pi} + y_1^{\pi} , \dots, x_r^{\pi} + y_{d-1}^{\pi}, A' + A' \]
are pairwise disjoint. We see that $\dim(A') = d$ because $\dim(X') = d-1$, and that $A'$ is contained in $r-1$ parallel lines. Thus, we may apply the inductive hypothesis for $A'$ to obtain $a_1, \dots, a_{s'} \in A'$, with $s' \leq 3d(r-1)^2$, such that
\[  |A' + \{a_1, \dots, a_{s'} \}| \geq (d+1)|A'| - 3d(r-1). \]  
Moreover, for every $1 \leq i \leq d-1$, we fix an element $b_i \in y_i^{\pi}$. Furthermore, we apply \eqref{b12} for the set $x_r^{\pi} + x_r^{\pi}$ to obtain elements $b_{d}, b_{d+1}, b_{d+2} \in x_r^{\pi}$ satisfying
\[ |x_r^{\pi} + \{b_{d}, b_{d+1}, b_{d+2}\}| \geq 2|x_r^{\pi}| -1. \]
As in the previous case, we see that the set $B = \{a_1, \dots, a_{s'},b_1, \dots, b_{d+2}\} \subseteq A$ satisfies
\begin{align*}
|A+ B| & \geq    |A' + \{a_1, \dots, a_s\}| + |x_{r}^{\pi} + \{b_{d}, b_{d+1}, b_{d+2}\} |  + \sum_{1\leq i \leq d-1} |x_{r}^{\pi} + \{b_i\}|\\
& \geq  (d+1)|A'| - 3d(r-1) + 2|x_r^{\pi}| - 1 + (d-1)|x_r^{\pi}|     \\
& \geq (d+1)|A| - 3dr.
\end{align*}
Moreover, by the inductive hypothesis, we have that
\[ |B| \leq s' + d+2 \leq  3d(r-1)^2 + 3d \leq 3dr^2,\]
and so, we finish our proof of Lemma \ref{lin}.
\end{proof}

\section{Proof of Theorem $\ref{mnz}$}

Our goal in this section is to present the proof of Theorem $\ref{mnz}$. We begin by noting the following asymmetric generalisation of Freiman's lemma which was given by Ruzsa \cite{Ru1994}. Thus, whenever $A, B \subseteq \mathbb{R}^d$ are finite sets with $\dim(A) = d$ and $|A| \geq |B|$, then 
\[ |A+B| \geq |A| + d|B| - d(d+1)/2. \]
Our first objective in this section is to show that in a slightly more specific version of this setting, whenever $A+B$ is close to the above lower bound, then both $A$ and $B$ can be efficiently covered by translates of a one dimensional subspace, which, in turn, implies that both $A$ and $B$ have dense subsets lying on translates of the same line.

\begin{lemma} \label{cov}
    Let $A,B \subseteq \RR^d$ be finite sets with $\dim(A) = d$ and $|A| = |B|$ and
    \[ |A+B| \leq |A| + (d+1/7)|B| - O_d(1). \]
    Then either $|A| \ll_d 1$ or there exists some line $l$ and some $x, y \in \mathbb{R}^d$ such that
    \[ |A \cap (x+l)| \geq |A|/d \ \ \text{and} \ \ |B \cap (y+l)| \geq |B|/d.  \]
    \end{lemma}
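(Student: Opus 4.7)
The plan is to find a common direction along which both $A$ and $B$ have one-dimensional structure, and then invoke Lemma \ref{m2} to force that the number of parallel lines needed is minimal. Write $N = |A| = |B|$. We may assume throughout that $N$ is sufficiently large in terms of $d$, since otherwise we are in the case $|A| \ll_d 1$. From $|A+B| \leq (d+1+\tfrac{1}{7}) N - O_d(1)$ and the Pl\"unnecke--Ruzsa inequality (Lemma \ref{prin}), we obtain $|A+A|, |B+B| \leq C N$ for some $C = C(d)$, and hence $|(A \cup B)+(A \cup B)| \leq |A+A| + 2|A+B| + |B+B| = O_d(|A \cup B|)$. The key step would then be to apply Lemma \ref{m1} to the set $A \cup B$, obtaining a single direction $v$ and parallel lines $l_1, \dots, l_s$ in direction $v$ covering $A \cup B$ outside an exceptional set of size $O(N^{1-\sigma})$, where $\sigma = \sigma(d) > 0$. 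Running Lemma \ref{m1} on the union, rather than on $A$ and $B$ separately, is precisely what forces a common direction.

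Next, I would set $A_0 = A \cap (l_1 \cup \cdots \cup l_s)$ and $B_0 = B \cap (l_1 \cup \cdots \cup l_s)$, and let $r, q$ denote the number of $l_i$'s meeting $A_0$ and $B_0$ respectively. For $N$ large, we retain $\dim(A_0) = d$, possibly after adjoining $O_d(1)$ extra points of $A$ together with $O_d(1)$ extra parallel lines containing them. Applying Lemma \ref{m2} to $(A_0, B_0)$ (after swapping, if needed, so that the larger set plays the role of $A$) yields
\[
|A_0+B_0| \geq |A_0| + \Big(d+1 - \tfrac{1}{r-d+2} - \tfrac{1}{q-c+2}\Big)|B_0| - (d-1)(r+q),
\]
where $c$ is as defined in Lemma \ref{m2}. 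Combining this with $|A_0+B_0| \leq |A+B| \leq |A| + (d + \tfrac{1}{7})|B| - O_d(1)$, and using $|A_0|, |B_0| \geq N - O(N^{1-\sigma})$ together with $r+q \leq 2s = o(N)$, rearrangement gives
\[
\tfrac{1}{r-d+2} + \tfrac{1}{q-c+2} \geq \tfrac{6}{7} - o(1).
\]

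Since $r \geq d$ and $q \geq c$, and since both alternatives $(r,q) = (d+1,c)$ and $(r,q) = (d,c+1)$ give the left-hand side equal to $\tfrac{1}{3} + \tfrac{1}{2} = \tfrac{5}{6} < \tfrac{6}{7}$, the only possibility is $r = d$ and $q = c$. Hence $A_0$ lies on exactly $d$ parallel lines in direction $v$, and $B_0$ lies on $c \leq d$ parallel lines in direction $v$. Pigeonhole then produces a translate of the line in direction $v$ containing at least $|A_0|/d \geq |A|/d - O_d(N^{1-\sigma})$ points of $A$, and similarly a parallel translate with $\geq |B|/d - O_d(N^{1-\sigma})$ points of $B$; for $N$ sufficiently large in terms of $d$ these yield the required dense lines.

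The main obstacle will be engineering this common direction $v$: running Lemma \ref{m1} on $A$ and $B$ separately could produce genuinely different directions, and aligning them a posteriori would demand substantial extra work. The device of applying Lemma \ref{m1} to $A \cup B$, which inherits small doubling from the Pl\"unnecke--Ruzsa inequality, neatly sidesteps this issue. The remaining subtleties, namely absorbing the $O(N^{1-\sigma})$ errors from Lemma \ref{m1} and preserving $\dim(A_0) = d$, are handled once $N$ is taken sufficiently large in terms of $d$.
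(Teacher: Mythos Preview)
Your overall strategy---reduce to a common direction via Lemma~\ref{m1}, then invoke Lemma~\ref{m2} to force $r$ and $q$ to be minimal---matches the paper's, but the execution differs in one place and leaves two small gaps.

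The paper applies Lemma~\ref{m1} to $A$ alone (not to $A\cup B$), extends the resulting family of parallel lines to cover \emph{all} of $A$ (adding at most $O(|A|^{1-\sigma})$ lines, since the leftover from Lemma~\ref{m1} has that size), and then covers $B$ fully by lines $m_1,\dots,m_q$ parallel to $l_1$. The bound $q\ll_d |A|^{1-\sigma}$ comes from the elementary estimate
\[
(d+8/7)\,|A|\ \ge\ |A+B|\ \ge\ \sum_{j=1}^{q}|(A\cap l_1)+m_j|\ \ge\ q\,|A\cap l_1|\ \gg_d\ q\,|A|^{\sigma},
\]
so there is no need to run Lemma~\ref{m1} on the union. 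Because the paper feeds the full sets $A,B$ into Lemma~\ref{m2}, the conclusion $r=d$, $q=c$ holds for the \emph{entire} sets, and pigeonhole gives $|A\cap l_i|\ge|A|/d$ on the nose.

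Your route through $A_0,B_0$ loses $O(N^{1-\sigma})$ points, and the final pigeonhole only delivers $|A_0|/d \ge |A|/d - O_d(N^{1-\sigma})$, which for large $N$ is strictly below the stated threshold $|A|/d$; taking $N$ large does not close this. Secondly, the ``swap if needed'' step is not justified: if $|B_0|>|A_0|$ then Lemma~\ref{m2} would require $\dim(B_0)=d$, and the hypothesis $\dim(A)=d$ does not supply this. Both issues evaporate if, after choosing the direction, you extend the parallel lines to cover all of $A$ (so $|A|=N\ge|B_0|$, no swap is needed, and $\dim(A)=d$ is available); the forcing argument then gives $r=d$ lines through the full set $A$, and pigeonhole is exact. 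With that patch your argument becomes essentially the paper's.
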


    \begin{proof}
This is true trivially when $A, B \subseteq \RR$, whence, we may assume that $d \geq 2$. Applying Lemma \ref{prin} with $k=2$, we may deduce that 
 \[ |A+A| \leq (d+ 8/7)^2 |A| . \]
 We may now apply Lemma \ref{m1} to obtain parallel lines $l_1, \dots, l_r$, with $r \ll_d |A|^{1- \sigma}$ for some $\sigma \in (0,1/2]$ that only depends on $d$, such that $A \subseteq l_1 \cup \dots \cup l_r$ and
 \[ |l_1 \cap A| \gg_d |A|^{\sigma} . \]
 We now cover $B$ with lines that are parallel to $l_1$, and so, let $q \in \mathbb{N}$ be the minimal natural number such that $B \subseteq m_1\cup \dots \cup m_q$, where $m_1, \dots, m_q$ are lines parallel to $l_1$. Note that $q \ll_d |A|^{1- \sigma}$, since
 \[ (d+8/7)|A| \geq |A+B| \geq \sum_{i=1}^{q} |l_1 + m_i| \geq q |l_1| \gg_{d} q |A|^{\sigma} . \]
 This allows us to apply Lemma \ref{m2}, whence, we have
 \[ |A+B| \geq |A| + \Big( d + 1 - \frac{1}{r-d + 2} - \frac{1}{q - c+ 2}\Big)|B| - (d-1)(r+q),\]
where $c = \min \{d, \dim(B)\}$. Here, since $r,q \ll_{d} |A|^{1- \sigma}$, we may combine this with the hypothesis of Lemma \ref{cov} to see that
\[ \Big( \ \frac{6}{7}  - \frac{1}{r-d + 2} - \frac{1}{q - c+ 2} \ \Big)|B| \ll_{d} r+q \ll_{d} |A|^{1- \sigma}.  \]
Note that if $r > d$ or $q > c$, then 
\[ 1/(r- d+2) + 1/(q-c+2) \leq 1/2 + 1/3 = 5/6, \]
which combines with the above inequality to gives us $|A| = |B| \ll_d 1$. On the other hand, when $r = d$ and $q = c$, then we must have some $1 \leq i \leq r$ and $1 \leq j \leq q$ such that
\[ |A \cap l_i| \geq |A|/r = |A|/d \ \ \text{and} \ \ |B \cap m_j| \geq |B|/q = |B|/c \geq |B|/d. \]
Thus, we conclude our proof of Lemma \ref{cov}.
    \end{proof}

With this in hand, we will now proceed with our proof of Theorem \ref{mnz}.

\begin{proof}[Proof of Theorem \ref{mnz}]
Our aim is to show that given a finite, non-empty set $A \subseteq \mathbb{R}^d$ such that $\dim(A) =d$, we either have $|A| \ll_d 1$ or $A$ is covered by at most $(d+1)^2$ translates of some line or we can find elements $a_1,\dots,a_c \in A$ such that
\[ |A+ \{a_1, \dots, a_c\}| \geq (d+1 + 1/16)|A|. \]
Setting $K = (d+1 + 1/10)$ and $\varepsilon = 100^{-d^2}$, we apply Lemma $\ref{btb}$ to obtain $c = c(d) >0$ and a set $A^{*} \subseteq A$ such that $|A^*| \geq (1 - 100^{-d^2})|A|$ and such that if we select $a_1,\dots, a_c \in A^*$ uniformly at random from $A^*$, then
\[ \mathbb{E}_{a_1,\dots,a_c \in A^*}  |A^* + \{a_1,\dots, a_c\}| \geq \min \{(1 - 100^{-d^2})|A^{*} + A^{*}|, (d + 1 + 1/10)|A^{*}|  \} .  \]
Here, if 
\begin{equation} \label{stp}
(1 - 100^{-d^2})|A^{*} + A^{*}| \geq  (d + 1 + 1/10)|A^*|,  \end{equation}
then we have that
\begin{align*}
\mathbb{E}_{a_1,\dots,a_c \in A^*}  |A^* + \{a_1,\dots, a_c\}| & \geq (1 - 100^{-d^2}) (d + 1 + 1/10)|A|  \\
& \geq (d + 1 + 1/12) |A|, 
\end{align*}
whence, we are done. Thus, we may assume that \eqref{stp} does not hold true, in which case, we obtain elements $a_1, \dots, a_c \in A^*$ such that
\begin{equation} \label{utm}
    |A^* + \{a_1,\dots, a_c\}| \geq  (1 - 100^{-d^2})|A^{*} + A^{*}| . 
\end{equation} 
\par

We now suppose that $\dim(A^*) = m$ for some $1 \leq m \leq d$. We first consider the subcase when 
\begin{equation} \label{as*}
|A^* + A^*| \geq (m + 1 + 1/10)|A^*| - m(m+1)/2. 
\end{equation}
Here, since $\dim (A) = d$, we see that there exist elements $b_{m+1},\dots, b_{d} \in A \setminus A^*$ which are linearly independent and are not contained in the affine span of $A^*$. Thus, we may now set $A' = \{a_1, \dots, a_c\} \cup \{b_{m+1},\dots, b_d\}$ to see that
\begin{align*}
|A+ A'| 
& \geq |A^* + A'| = |A^* + \{a_1, \dots, a_c\}| + \sum_{i=m+1}^d |A^* +  \{b_i\}|   \\
& \geq (1 - 100^{-d^2})|A^{*} + A^{*}|  + (d-m)|A^*| \\
& \geq (1- 100^{-d^2})(m+1 + 1/10)|A^*| + (d-m)|A^*| - m(m+1)/2  \\
& \geq (d + 1 + 1/15) |A^*| - m(m+1)/2 \\
&\geq (d+1+2/31)|A| - d(d+1),
\end{align*}
where the second and third inequalities utilise \eqref{utm} and \eqref{as*} respectively. Thus we either have that
\[ |A+A'| \geq (d + 1 + 1/16)|A| \]
or $|A| \ll_d 1$, and so, we are done in this subcase.
\par

 Noting the above, we may now assume that \eqref{as*} does not hold true. After applying suitable affine transformations, we may employ Lemma \ref{cov} to show that either $|A| \ll_{m} 1$ or there exists some line $l$ in $\RR^d$ such that
\begin{equation} \label{dense}
 |A \cap l| \geq |A^* \cap l| \geq |A^*|/m \geq |A|(1- 100^{-d^2})/d .  
 \end{equation}
Since $m \leq d$ we are done in the setting when $|A| \ll_m 1$, and so, we may assume that some $l$ exists such that \eqref{dense} holds. We now cover $A$ with translates of the line $l$, and so, let $l_1, \dots, l_r$ be lines parallel to $l$ such that 
\[ A \subseteq l_1 \cup \dots \cup l_r ,\]
and $r$ is minimal. We may assume that $r \geq (d+1)^2+1$, since otherwise, we are done. Moreover, writing $p_i = A \cap l_i$ for every $1 \leq i \leq r$, we may assume that 
\[ |p_1| \geq \dots \geq |p_r|,\]
and so, 
\[ |p_1| \geq |A \cap l| \geq |A|(1- 100^{-d^2})/d . \]
Writing $r_0 = (d+1)^2 +1$, observe that the sumsets 
\[ p_1 + p_1, \dots, p_1 + p_{r_0}\]
are pairwise disjoint, and so, we may apply \eqref{b12} for each such sumset to procure elements $a_{i,1}, a_{i,2}, a_{i,3} \in p_i$, for every $1\leq i \leq r_0$, such that
\begin{align*}
    |A + \{ a_{1,1}, \dots, a_{r_0,3}\}|  
    & \geq \sum_{i=1}^{r_0} |p_1 + \{ a_{i,1}, \dots, a_{i,3}\}|  \\
    & \geq \sum_{i=1}^{r_0} (|p_1| + |p_i| - 1) \geq r_0 |p_1| \\
    & > (d+1)^2 |A|(1 - 100^{-d^2})/d \\
    & \geq (d+2)|A|,
\end{align*}  
This implies the desired bound, and consequently, we conclude the proof of Theorem \ref{mnz}.
\end{proof}

\bibliographystyle{amsplain}
\bibliography{reference}
\end{document}